\newtheorem{theorem}{Theorem}[section]
\newtheorem{corollary}[theorem]{Corollary}
\newtheorem{lemma}[theorem]{Lemma}
\newtheorem{proposition}[theorem]{Proposition}
\newtheorem{remark}[theorem]{Remark}
\theoremstyle{definition}
\newtheorem{definition}[theorem]{Definition}
\numberwithin{figure}{section}
\numberwithin{table}{section}
\newcommand{\cat}{\ensuremath{\mathrm{cat}}}
\newcommand{\secat}{\ensuremath{\mathrm{secat}}}
\newcommand{\id}{\ensuremath{\mathrm{Id}}}
\newcommand{\TC}{\ensuremath{\mathrm{TC}}}
\newcommand{\D}{\ensuremath{\mathrm{D}}}
\newcommand{\pr}{\ensuremath{\mathrm{pr}}}
\begin{document}

\title{Higher Homotopic Distance}

\author{Ayse Borat and Tane Vergili}

\date{\today}

\address{\textsc{Ayse Borat}
Bursa Technical University\\
Faculty of Engineering and Natural Sciences\\
Department of Mathematics\\
Bursa, Turkey}
\email{ayse.borat@btu.edu.tr} 

\address{\textsc{Tane Vergili}
Ege University\\
Faculty of Science\\
Department of Mathematics\\
Izmir, Turkey}
\email{tane.vergili@ege.edu.tr}

\subjclass[2010]{}

\keywords{Homotopic distance, topological complexity, Lusternik Schnirelmann category}

\begin{abstract} The concept of homotopic distance and its higher analog are introduced in \cite{MVML}. In this paper we introduce some important properties of higher homotopic distance, investigate the conditions under which $\cat$, $\secat$ and higher dimensional topological complexity are equal to the higher homotopic distance, and give alternative proofs, using higher homotopic distance, to some $\TC_n$-related theorems.
\end{abstract}

\maketitle

%%%%%%%%%%%%%%%%%%%%%%%%%%%%%%%%%%%%%%%%%%%%%%%%%%%%%
\section{Introduction}
%%%%%%%%%%%%%%%%%%%%%%%%%%%%%%%%%%%%%%%%%%%%%%%%%%%%%

The notion of homotopic distance is first introduced by Macias-Virgos and Mosquera-Lois in \cite{MVML}, which is a homotopy invariant whose special cases are topological complexity ($\TC$) and Lusternik-Schnirelmann category ($\cat$), and is defined as follows. 

\begin{definition}\cite{MVML} Given two maps $f, g: X\rightarrow Y$, the homotopic distance between $f$ and $g$ is the least non-negative integer $k$ such that one can find an open cover $\{U_0, \cdots, U_k\}$ for $X$ satisfying $f\big|_{U_i}\simeq g\big|_{U_i}$ for all $i=0,1, \cdots, k$. It is denoted by $\D(f,g)$. If there is no such a covering, we write $\D(f,g)=\infty$. 
\end{definition}

The organization of the paper is as follows: 

In Section 2, we will recall the higher homotopic distance and introduce some propositions and lemmas. These lemmas will be mainly used in Section 3 and Section 4 to prove main theorems of this paper. 

Motivated from the fact that homotopic distance has a relation between topological complexity and Lusternik-Schnirelmann category, we will show that $n$-th homotopic distance of some specific maps is equal to $n$-th topological complexity and Lusternik-Schnirelmann category. Moreover, we will give the relation between $n$-th homotopic distance and sectional category. Later in the same section, we will give alternative proofs of the well-known theorems about $\TC_n$. 

In the last section, we will prove the homotopy invariance of higher homotopic distance and deduce that $\TC_n$ is homotopy invariant. 

For a further reading about the variances of homotopic distance, we refer the interested readers to see \cite{MVML2} in which categorical version of homotopic distance between functors is introduced. 

%%%%%%%%%%%%%%%%%%%%%%%%%%%%%%%%%%%%%%%%%%%%%%%%%%%%%
\section{Higher Homotopic Distance and Some of Its Properties}
%%%%%%%%%%%%%%%%%%%%%%%%%%%%%%%%%%%%%%%%%%%%%%%%%%%%%

Higher homotopic distance, as well as usual homotopic distance, was first introduced in \cite{MVML} by Macias-Virgos and Mosquera-Lois. In this section, we will recall its definition and introduce some of its properties which are of importance to prove some main theorems.

\begin{definition}\cite{MVML} Given $f_i: X\rightarrow Y$ for $i\in \{1, 2, \cdots, n\}$, the $n$-th homotopic distance $\D(f_1, f_2, \cdots, f_n)$ is the least non-negative integer $k$ such that there exist open subsets $U_0, U_1, \cdots, U_k$ which covers $X$ and satisfy $f^j_1|_{U_j}\simeq f^j_2|_{U_j}\simeq \cdots \simeq f^j_n|_{U_j}$ for all $j\in\{0, 1, \cdots, k\}$.

If there is no such a covering, we define $\D(f_1, f_2, \cdots, f_n)=\infty$. 
\end{definition}

The following four propositions are direct consequences of the definition. 

\begin{proposition} $\D(f_1, f_2, \cdots, f_n)= \D(f_{\sigma(1)}, f_{\sigma(2)}, \cdots, f_{\sigma(n)})$ holds for any permutation $\sigma$ of $\{1, 2, \cdots, n\}$.
\end{proposition}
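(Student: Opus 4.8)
The plan is to exploit the fact that the defining condition is symmetric in the maps $f_1, f_2, \ldots, f_n$. For a fixed subset $U \subseteq X$, the chain requirement $f_1|_U \simeq f_2|_U \simeq \cdots \simeq f_n|_U$ asserts that consecutive restrictions are homotopic; since $\simeq$ is an equivalence relation on the set of maps $U \to Y$, I would first reformulate this as the pairwise condition $f_i|_U \simeq f_{i'}|_U$ for all $i, i' \in \{1, \ldots, n\}$. The forward implication uses transitivity of $\simeq$ to connect any two indices along the chain, while the reverse is immediate. The value of this reformulation is that the pairwise statement is visibly a property of the unordered collection $\{f_i|_U\}$, hence independent of how the maps are listed.

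Next I would compare the two families of admissible open covers. Fix a permutation $\sigma$ of $\{1, \ldots, n\}$ and an open cover $\{U_0, U_1, \ldots, U_k\}$ of $X$. By the reformulation above, this cover witnesses the condition for the tuple $(f_1, \ldots, f_n)$ precisely when, on each $U_j$, all the restrictions $f_i|_{U_j}$ are mutually homotopic; and it witnesses the condition for $(f_{\sigma(1)}, \ldots, f_{\sigma(n)})$ precisely when all the restrictions $f_{\sigma(i)}|_{U_j}$ are mutually homotopic. Since the collection of maps being restricted on each $U_j$ is identical in the two cases, these requirements coincide. Therefore a cover is admissible for the first tuple if and only if it is admissible for the permuted tuple.

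Taking the least $k$ admitting such a cover then yields $\D(f_1, \ldots, f_n) = \D(f_{\sigma(1)}, \ldots, f_{\sigma(n)})$, with the convention that both sides equal $\infty$ when no admissible cover exists. There is no genuine obstacle in this argument; the only point deserving care is the appeal to symmetry and transitivity of the homotopy relation when passing between the chain form and the pairwise form, together with the observation that these properties are inherited after restricting to each $U_j$.
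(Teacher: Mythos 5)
Your proof is correct and matches the paper's treatment: the paper records this proposition as a direct consequence of the definition (with no written argument), and your reformulation of the chain condition as mutual pairwise homotopy via symmetry and transitivity of $\simeq$ is exactly the observation that makes the admissible covers, and hence the minima, identical for any ordering of the maps.
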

\qed

\begin{proposition} $\D(f_1, f_2, \cdots, f_n)=0$ iff $f_i\simeq f_{i+1}$ for each $i \in \{1, 2, \cdots, n-1\}$.
\end{proposition}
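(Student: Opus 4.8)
The plan is to prove both implications directly from the definition, using the observation that the vanishing $\D(f_1, f_2, \cdots, f_n)=0$ corresponds precisely to the case $k=0$, in which the open cover consists of a single set that is forced to equal all of $X$.

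First I would treat the forward implication. Assuming $\D(f_1, f_2, \cdots, f_n)=0$, the definition applied with $k=0$ supplies an open cover $\{U_0\}$ of $X$; since a single open set already covers $X$, necessarily $U_0=X$. The defining condition then reads $f_1|_X\simeq f_2|_X\simeq\cdots\simeq f_n|_X$, that is, $f_1\simeq f_2\simeq\cdots\simeq f_n$ on all of $X$. In particular each consecutive pair satisfies $f_i\simeq f_{i+1}$ for every $i\in\{1,2,\cdots,n-1\}$.

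For the converse, suppose $f_i\simeq f_{i+1}$ for every $i\in\{1,2,\cdots,n-1\}$. I would take the trivial one-element open cover given by $U_0=X$. The hypothesis is exactly the chain of consecutive homotopies $f_1|_X\simeq f_2|_X\simeq\cdots\simeq f_n|_X$ demanded by the definition when $k=0$, so this cover witnesses $\D(f_1, f_2, \cdots, f_n)\le 0$; as the distance is by definition a non-negative integer, it must equal $0$.

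The argument carries no genuine obstacle, since the two halves are immediate translations of the definition into one another. The only point deserving a word of care is the reading of the chain notation $f_1|_{U_0}\simeq\cdots\simeq f_n|_{U_0}$: it abbreviates the conjunction of the consecutive homotopies, and by transitivity of the homotopy relation $\simeq$ this is equivalent to asserting that all the restrictions are mutually homotopic. I would make this explicit so that the equivalence between the chain condition and the consecutive-pair hypothesis is transparent.
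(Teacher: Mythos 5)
Your proof is correct and is exactly the argument the paper intends: the paper states this proposition without proof as a direct consequence of the definition, and your unwinding of the $k=0$ case (the single cover element $U_0$ must equal $X$, and the chain notation means consecutive homotopies, hence mutual homotopy by transitivity) is precisely the trivial verification being omitted.
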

\qed

\begin{proposition}\label{A} Given maps $f_i: X\rightarrow Y$ and $g_i: X\rightarrow Y$ for $i\in \{1, 2, \cdots, n\}$. If $f_i\simeq g_i$ for each $i$, then $\D(f_1, f_2, \cdots, f_n)= \D(g_1, g_2, \cdots, g_n)$. 
\end{proposition}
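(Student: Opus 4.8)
The plan is to exploit the symmetry of the statement together with the fact that the homotopy relation between maps is an equivalence relation that is preserved under restriction to subspaces. Since the hypothesis $f_i\simeq g_i$ is itself symmetric in $f$ and $g$, it suffices to establish a single inequality, say $\D(g_1,\ldots,g_n)\le \D(f_1,\ldots,f_n)$; interchanging the roles of the $f_i$ and the $g_i$ then yields the reverse inequality, and hence the desired equality.

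First I would dispose of the infinite case. If $\D(f_1,\ldots,f_n)=\infty$ there is nothing to prove for this inequality, and the infinite case for $\D(g_1,\ldots,g_n)$ will be covered by the symmetric run of the argument. So I would assume $k=\D(f_1,\ldots,f_n)<\infty$ and fix an open cover $U_0,\ldots,U_k$ of $X$ with $f_1|_{U_j}\simeq\cdots\simeq f_n|_{U_j}$ for every $j\in\{0,\ldots,k\}$.

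The heart of the argument is to show that the \emph{same} cover $U_0,\ldots,U_k$ already witnesses $\D(g_1,\ldots,g_n)\le k$. I would first note that a global homotopy $H_i\colon X\times I\to Y$ from $f_i$ to $g_i$ restricts to a homotopy $H_i|_{U_j\times I}$ from $f_i|_{U_j}$ to $g_i|_{U_j}$, so that $f_i|_{U_j}\simeq g_i|_{U_j}$ on each $U_j$. Combining this with the chain $f_1|_{U_j}\simeq\cdots\simeq f_n|_{U_j}$ and invoking the symmetry and transitivity of $\simeq$, each $g_i|_{U_j}$ is homotopic to $g_1|_{U_j}$ via $g_i|_{U_j}\simeq f_i|_{U_j}\simeq f_1|_{U_j}\simeq g_1|_{U_j}$; a further application of transitivity gives $g_1|_{U_j}\simeq\cdots\simeq g_n|_{U_j}$ for every $j$. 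Hence $U_0,\ldots,U_k$ is an admissible cover for $(g_1,\ldots,g_n)$ and therefore $\D(g_1,\ldots,g_n)\le k=\D(f_1,\ldots,f_n)$.

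The proof is essentially bookkeeping, so I do not anticipate a genuine obstacle; the only points requiring care are observing that the restriction of a homotopy is again a homotopy and assembling the transitivity chain in the correct order. The concluding step is purely formal: running the same reasoning with $f$ and $g$ interchanged yields $\D(f_1,\ldots,f_n)\le \D(g_1,\ldots,g_n)$, and the two inequalities together give the equality.
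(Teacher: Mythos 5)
Your proof is correct, and it is essentially the argument the paper has in mind: the paper states this proposition without proof as a direct consequence of the definition, and your write-up simply spells out that consequence (restrict the global homotopies $f_i\simeq g_i$ to each $U_j$, chain them with the cover's homotopies by symmetry and transitivity, and conclude by the symmetry of the hypothesis). No gaps; the bookkeeping is exactly right.
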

\qed

\begin{proposition} If  $1 < m < n$ and $f_1, f_2, \cdots, f_m, \cdots, f_n: X \rightarrow Y$ are maps, then $\D(f_1, f_2, \cdots, f_m)\leq \D(f_1, f_2, \cdots, f_n)$.
\end{proposition}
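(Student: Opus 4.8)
The plan is to show that any open cover witnessing the value of $\D(f_1, f_2, \cdots, f_n)$ automatically witnesses the defining condition for $\D(f_1, f_2, \cdots, f_m)$ as well; since the latter distance is defined as a least such integer, it can only be smaller. First I would dispose of the degenerate case: if $\D(f_1, f_2, \cdots, f_n)=\infty$ the inequality holds trivially, so I may assume $\D(f_1, f_2, \cdots, f_n)=k$ is finite.

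In that case, by definition there exist open subsets $U_0, U_1, \cdots, U_k$ covering $X$ such that on each $U_j$ one has the chain of homotopies $f_1|_{U_j}\simeq f_2|_{U_j}\simeq \cdots \simeq f_n|_{U_j}$. The key observation is that, since $m<n$, simply truncating this chain after its $m$-th term yields $f_1|_{U_j}\simeq f_2|_{U_j}\simeq \cdots \simeq f_m|_{U_j}$ on each $U_j$. Hence the very same cover $\{U_0, U_1, \cdots, U_k\}$ satisfies the requirement in the definition of $\D(f_1, f_2, \cdots, f_m)$, and so $\D(f_1, f_2, \cdots, f_m)\leq k=\D(f_1, f_2, \cdots, f_n)$.

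There is essentially no serious obstacle here: the statement is a monotonicity property, and it follows directly from the fact that a witnessing cover for a longer chain of maps restricts to a witnessing cover for any initial segment of that chain. The only point requiring minor care is the infinite case, which is handled at the outset, and one should note that the argument uses only the first $m$ maps, so it is immaterial how the remaining maps $f_{m+1}, \cdots, f_n$ behave.
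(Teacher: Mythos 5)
Your proof is correct and matches the paper's intent exactly: the paper states this proposition as a direct consequence of the definition (with no written argument), and your observation that a witnessing cover for $\D(f_1, \cdots, f_n)$ truncates to a witnessing cover for $\D(f_1, \cdots, f_m)$ is precisely that consequence. Your explicit handling of the infinite case is a minor but sound addition.
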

\qed

\begin{proposition} If $f_1, f_2, \cdots, f_n: X \rightarrow Y$ are maps and if $\{U_0, U_1, \cdots, U_k\}$ is any open covering of $X$, then we have
\[
\D(f_1, f_2, \cdots, f_n) \leq \sum_{i=0}^k \D(f_1\big|_{U_i}, f_2\big|_{U_i}, \cdots, f_n\big|_{U_i}) + k.
\]
\end{proposition}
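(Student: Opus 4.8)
The plan is to take the open covering $\{U_0, U_1, \dots, U_k\}$ and, on each piece $U_i$, invoke the definition of the restricted distance to obtain a subordinate open cover, then amalgamate all these subordinate covers into a single open cover of $X$ and count its pieces. If any $\D(f_1|_{U_i}, \dots, f_n|_{U_i}) = \infty$ the asserted inequality is vacuous, so I may assume every term on the right is finite.

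First I would set $d_i := \D(f_1|_{U_i}, f_2|_{U_i}, \dots, f_n|_{U_i})$ for each $i \in \{0,1,\dots,k\}$. By the definition of higher homotopic distance applied to the maps $f_1|_{U_i}, \dots, f_n|_{U_i} \colon U_i \to Y$, there is an open cover $\{V^i_0, V^i_1, \dots, V^i_{d_i}\}$ of $U_i$ such that on each member one has
\[
f_1|_{V^i_j} \simeq f_2|_{V^i_j} \simeq \cdots \simeq f_n|_{V^i_j}, \qquad j \in \{0,1,\dots,d_i\}.
\]
The key point here is that, although each $V^i_j$ is a priori only open in $U_i$, the fact that $U_i$ is itself open in $X$ guarantees that $V^i_j$ is open in $X$ as well; this is what lets the subordinate covers be reassembled into an honest open cover of the whole space.

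Next I would form the collection $\{\,V^i_j : 0 \le i \le k,\ 0 \le j \le d_i\,\}$. Since the $U_i$ cover $X$ and each $\{V^i_j\}_j$ covers $U_i$, this collection is an open cover of $X$, and by construction all of $f_1, \dots, f_n$ become homotopic after restriction to any one of its members. Its cardinality is $\sum_{i=0}^{k}(d_i + 1) = \sum_{i=0}^{k} d_i + (k+1)$. Because $\D(f_1, \dots, f_n)$ is by definition one less than the size of the smallest such cover, this exhibited cover yields
\[
\D(f_1, f_2, \dots, f_n) \le \left(\sum_{i=0}^{k} d_i + (k+1)\right) - 1 = \sum_{i=0}^{k} \D(f_1|_{U_i}, \dots, f_n|_{U_i}) + k,
\]
which is the claim. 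The only genuinely delicate point is the openness observation above together with the off-by-one bookkeeping that converts ``least $k$ with a cover of size $k+1$'' into the stated sum; the rest is routine amalgamation.
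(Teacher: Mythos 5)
Your proof is correct and follows essentially the same route as the paper's: refine each $U_i$ by a cover realizing $\D(f_1|_{U_i},\dots,f_n|_{U_i})$, amalgamate the refinements into one open cover of $X$, and count its $\sum_{i=0}^{k}(d_i+1)$ members. Your explicit remarks on the openness of the $V^i_j$ in $X$ and on the vacuous infinite case are points the paper leaves implicit, but the argument is the same.
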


\begin{proof} Let $\D(f_1\big|_{U_i}, f_2\big|_{U_i}, \cdots, f_n\big|_{U_i})=m_i$ for all $i\in \{0, 1, \cdots, k\}$. Then there exists an open covering $\{U^0_i, U^1_i, \cdots, U^{m_i}_i\}$ of $U_i$ such that $f_1\big|_{U^j_i}\simeq f_2\big|_{U^j_i}\simeq f_n\big|_{U^j_i}$ for all $j\in \{0, 1, \cdots, m_i\}$. 

Notice that the collection $\mathcal{U}=\{U^0_0, U^1_0, \cdots, U^{m_0}_0, U^0_1, U^1_1, \cdots, U^{m_1}_1, \cdots, U^0_k, U^1_k, \cdots, U^{m_k}_k\}$ is an open cover for $X$ such that $f_1\big|_{V}\simeq f_2\big|_{V}\simeq \cdots \simeq f_n\big|_{V}$ for all $V\in \mathcal{U}$. The required inequality follows from the cardinality of $\mathcal{U}$ is $(m_0 + m_1 + \cdots + m_k) +k+1$. 
\end{proof}

The following propositions will be used to give the main results in the third and the fourth sections. 

\begin{proposition}\label{P31} Given maps $f_i: X\rightarrow Y$ and $h_i: Y\rightarrow Z$ for $i\in \{1, 2, \cdots, n\}$. If $h_i \simeq h_{i+1}$ for every $i\in \{1, 2,\cdots, n-1 \}$, then 
\[
\D(h_1 \circ f_1, h_2 \circ f_2, \cdots, h_n \circ f_n) \leq \D(f_1, f_2, \cdots,f_n).  
\]  
\end{proposition}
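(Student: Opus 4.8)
The plan is to reuse an optimal cover for the source distance. First I would set $k=\D(f_1,f_2,\dots,f_n)$ and choose an open cover $\{U_0,U_1,\dots,U_k\}$ of $X$ realizing it, so that on each piece one has $f_1|_{U_j}\simeq f_2|_{U_j}\simeq\cdots\simeq f_n|_{U_j}$ for every $j\in\{0,1,\dots,k\}$. The whole strategy is to argue that this \emph{same} cover already witnesses the homotopic distance of the composites; once that is established, the inequality $\D(h_1\circ f_1,\dots,h_n\circ f_n)\le k$ is immediate from the definition.

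The heart of the argument is the following chain, carried out for a fixed index $j$ and each $i\in\{1,\dots,n-1\}$. Writing $(h_i\circ f_i)|_{U_j}=h_i\circ(f_i|_{U_j})$, I would combine two elementary facts about homotopies and composition. On one hand, post-composing the homotopy $f_i|_{U_j}\simeq f_{i+1}|_{U_j}$ (available on $U_j$ by the choice of the cover) with the fixed map $h_i$ yields $h_i\circ f_i|_{U_j}\simeq h_i\circ f_{i+1}|_{U_j}$. On the other hand, pre-composing the homotopy $h_i\simeq h_{i+1}$ (available by hypothesis) with the fixed map $f_{i+1}|_{U_j}$ yields $h_i\circ f_{i+1}|_{U_j}\simeq h_{i+1}\circ f_{i+1}|_{U_j}$. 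Transitivity of homotopy then gives $(h_i\circ f_i)|_{U_j}\simeq(h_{i+1}\circ f_{i+1})|_{U_j}$.

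Since this holds for every $i$ and every $j$, each $U_j$ satisfies $(h_1\circ f_1)|_{U_j}\simeq\cdots\simeq(h_n\circ f_n)|_{U_j}$, so $\{U_0,\dots,U_k\}$ is an admissible cover for the composites and the claimed inequality follows.

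The proof is essentially bookkeeping, so I do not expect a serious obstacle; the one point deserving care is that the two hypotheses enter at different places---the $f_i$ agree up to homotopy only after restriction to the cover pieces, whereas the $h_i$ agree up to homotopy globally---and the two-step chain above is precisely what glues these together. As an alternative I would note that Proposition~\ref{A} lets one first replace every $h_i$ by $h_1$ (using $h_i\simeq h_1$, hence $h_i\circ f_i\simeq h_1\circ f_i$), reducing the problem to showing that post-composition by a single fixed map $h_1$ does not increase the distance, which is exactly the one-map version of the chain above.
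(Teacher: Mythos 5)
Your proposal is correct and follows essentially the same argument as the paper: reuse the optimal cover for $\D(f_1,\dots,f_n)$ and show on each piece $U_j$ that $(h_i\circ f_i)|_{U_j}\simeq (h_{i+1}\circ f_{i+1})|_{U_j}$ by combining the restricted homotopy between the $f$'s with the global homotopy between the $h$'s. Your two-step chain merely spells out explicitly the middle homotopy that the paper's proof states in one step, so there is no substantive difference.
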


\begin{proof} Suppose $\D(f_1, f_2, \cdots,f_n)=k$. Then there exists an open covering $\{U_0, U_1, \cdots, U_k\}$ of $X$ such that $f_1|_{U_j} \simeq f_2|_{U_j}\simeq \cdots \simeq f_n|_{U_j}$ for each $j\in \{0, 1, \cdots, k\}$. 

For each $j\in \{0, 1, \cdots, k\}$ and for any distinct $\ell, m \in \{1, 2, \cdots, n\}$, we have 
\[
\Big( h_\ell \circ f_\ell \Big)\Big|_{U_j}\simeq  h_\ell \circ f_\ell\Big|_{U_j} \simeq h_m \circ f_m\Big|_{U_j} \simeq \Big( h_m \circ f_m \Big)\Big|_{U_j}.
\]

Therefore $\D(h_1 \circ f_1, h_2 \circ f_2, \cdots, h_n \circ f_n)\leq k$.

\end{proof}

\begin{proposition}\label{P32} Given maps $f_i: X\rightarrow Y$ and $h_i: Z\rightarrow X$ for $i\in \{1, 2, \cdots, n\}$. If $h_i \simeq h_{i+1}$ for every $i\in \{1, 2,\cdots, n-1 \}$, then 
\[
\D(f_1 \circ h_1, f_2 \circ h_2, \cdots, f_n \circ h_n) \leq \D(f_1, f_2, \cdots,f_n).  
\]  
\end{proposition}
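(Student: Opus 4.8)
The plan is to mirror the proof of Proposition \ref{P31}, with one essential modification: because precomposition changes the source from $X$ to $Z$, I cannot reuse the covering of $X$ directly but must transport it back along the $h_i$. First I would set $\D(f_1, f_2, \cdots, f_n)=k$ and fix an open cover $\{U_0, U_1, \cdots, U_k\}$ of $X$ with $f_1\big|_{U_j}\simeq f_2\big|_{U_j}\simeq \cdots \simeq f_n\big|_{U_j}$ for every $j$. By transitivity of homotopy, the hypothesis $h_i\simeq h_{i+1}$ yields $h_i\simeq h_1$ for all $i$; I would fix a single representative $h:=h_1$.

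Next I would pull back the covering along this one fixed map, setting $V_j:=h^{-1}(U_j)$. Since $h$ is continuous and $\{U_j\}$ covers $X$, the family $\{V_0, V_1, \cdots, V_k\}$ is an open cover of $Z$ of the same cardinality, and by construction $h(V_j)\subseteq U_j$, so $h$ restricts to a map $h\big|_{V_j}: V_j\to U_j$. The heart of the argument is then the following chain of homotopies on each $V_j$, for any distinct $\ell, m\in\{1, 2, \cdots, n\}$:
\[
(f_\ell \circ h_\ell)\big|_{V_j} \simeq (f_\ell \circ h)\big|_{V_j} = (f_\ell\big|_{U_j}) \circ (h\big|_{V_j}) \simeq (f_m\big|_{U_j}) \circ (h\big|_{V_j}) = (f_m \circ h)\big|_{V_j} \simeq (f_m \circ h_m)\big|_{V_j}.
\]
Here the two outer homotopies come from $h_\ell\simeq h$ and $h\simeq h_m$, postcomposed with $f_\ell$ and $f_m$ respectively, while the middle homotopy comes from $f_\ell\big|_{U_j}\simeq f_m\big|_{U_j}$, precomposed with $h\big|_{V_j}$. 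This exhibits $\{V_0, V_1, \cdots, V_k\}$ as a witnessing cover, giving $\D(f_1 \circ h_1, f_2 \circ h_2, \cdots, f_n \circ h_n)\leq k$.

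The main obstacle I anticipate is purely bookkeeping rather than conceptual: one must resist the temptation to pull back each $U_j$ along its own map $h_i$, since the resulting families would not assemble into a single cover of $Z$. Pulling back along the fixed representative $h$ is what makes the cover well defined, and it is precisely the homotopies $h_i\simeq h$ that reconcile the discrepancy between $f_i\circ h_i$ and $f_i\circ h$ at the two ends of the chain. The only point requiring a moment of care is the factorization $(f_i\circ h)\big|_{V_j}=(f_i\big|_{U_j})\circ(h\big|_{V_j})$, which holds exactly because $V_j=h^{-1}(U_j)$ forces the image of $h\big|_{V_j}$ into $U_j$.
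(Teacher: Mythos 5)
Your proof is correct and takes essentially the same route as the paper: pull the witnessing cover of $X$ back to $Z$ along one of the mutually homotopic maps $h_i$, then chain together the homotopy among the $f_i$'s with the homotopies among the $h_i$'s on each $V_j$. If anything, your version is executed more carefully than the paper's own, which pulls back along $h_\ell$ and then treats the restriction $h'_{m,j}\colon V_j \to U_j$ of the \emph{other} map $h_m$ as well defined (it need not be, since $h_m(V_j)\not\subseteq U_j$ in general); you avoid this by fixing the single representative $h=h_1$ and post-composing the homotopies $h_\ell\simeq h\simeq h_m$ with $f_\ell$ and $f_m$, so that every homotopy in your chain takes values in $Y$.
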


\begin{proof} Suppose $\D(f_1, f_2, \cdots,f_n)=k$. Then there exists an open covering $\{U_0, U_1, \cdots, U_k\}$ of $X$ such that $f_1|_{U_j}\simeq f_2|_{U_j}\simeq \cdots \simeq f_n|_{U_j}$ for each $j\in \{0, 1, \cdots, k\}$. 

Let $V_j:=h^{-1}_{\ell}(U_j)\subseteq Z$. Notice that open subsets $V_j$ cover $Z$. Denote by $h'_{i,j}: V_j\rightarrow U_j$ the restriction on both domain and range, and denote by $\iota_j: U_j\xhookrightarrow{} X$ the inclusion.  

Then for each $j\in \{0, 1, \cdots, k\}$ and for any distinct $\ell, m \in \{1, 2, \cdots, n\}$, we have 
\begin{align*}
\Big(f_\ell \circ h_\ell\Big)\Big|_{U_j} &= {f_\ell}\big|_{U_j} \circ h'_{\ell,j}\simeq {f_m}\big|_{U_j} \circ h'_{\ell,j} \\
&\simeq {f_m}\big|_{U_j} \circ h'_{m,j} = f_m \circ \iota_j \circ h'_{m,j} \\
&= f_m \circ {h_m}\big|_{U_j} = \Big(f_m \circ h_m\Big)\Big|_{U_j}.
\end{align*}
\noindent So $\D(f_1 \circ h_1, f_2 \circ h_2, \cdots, f_n \circ h_n)\leq k$. 
\end{proof}

\begin{lemma}\label{OS} \cite{OS} Let $\mathcal{U}=\{U_0, U_1, \cdots, U_m\}$ and $\mathcal{V}=\{V_0, V_1, \cdots, V_n\}$ be two open coverings of a normal space $X$ such that each set of $\mathcal{U}$ satisfies Property (A) and each set of $\mathcal{V}$ satisfies Property (B). If Property (A) and Property (B) are inherited by open subsets and disjoint unions, then $X$ has an open covering $\mathcal{W}=\{W_0, W_1, \cdots, W_{m+n}\}$ which satisfies both Property (A) and Property (B). 
\end{lemma}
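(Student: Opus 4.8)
The plan is to reduce the problem to the common refinement of the two covers and then to amalgamate its members in a controlled way. First I would form the open cover $\{U_i \cap V_j\}_{0 \le i \le m,\, 0 \le j \le n}$ of $X$. Each $U_i \cap V_j$ is an open subset of $U_i$, hence satisfies Property (A), and is an open subset of $V_j$, hence satisfies Property (B); so every member of this refinement satisfies both properties simultaneously. The target cover $\mathcal{W}$ should then be obtained by partitioning these $(m+1)(n+1)$ pieces into $m+n+1$ groups, indexed by the \emph{diagonal} value $k = i+j \in \{0, 1, \dots, m+n\}$, and taking $W_k$ to be the union of the pieces lying on diagonal $k$. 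Note that $k$ ranges over exactly $m+n+1$ values, which matches the required cardinality of $\mathcal{W}$.

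The decisive combinatorial observation is that two distinct pairs $(i,j) \ne (i',j')$ with $i+j = i'+j' = k$ must differ in \emph{both} coordinates. This is what makes it plausible to separate the pieces lying on a single diagonal from one another, which is exactly what is needed, since Properties (A) and (B) are assumed to pass only to \emph{disjoint} unions and not to arbitrary ones. Next I would invoke normality: because $X$ is normal and both covers are finite, there exist partitions of unity $\{\phi_i\}$ and $\{\psi_j\}$ subordinate to $\mathcal{U}$ and $\mathcal{V}$, and the products $\phi_i \psi_j$ form a partition of unity subordinate to $\{U_i \cap V_j\}$. Comparing these products along each fixed diagonal lets me carve out, inside each $U_i \cap V_j$, an open set $G_{ij}$ on which the pair $(i,j)$ strictly dominates its diagonal-mates, so that for fixed $k$ the sets $\{G_{ij} : i+j = k\}$ are pairwise disjoint. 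Setting
\[
W_k = \bigcup_{i+j=k} G_{ij}, \qquad k = 0, 1, \dots, m+n,
\]
then exhibits each $W_k$ as a disjoint union of open sets each satisfying (A) and (B), whence $W_k$ inherits both properties, and $\mathcal{W} = \{W_0, \dots, W_{m+n}\}$ is the desired cover.

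The hard part will be the construction of the $G_{ij}$, because strict domination alone does not suffice to cover $X$: at a point where the maximal value of the products $\phi_i \psi_j$ is attained by two pairs lying on the same diagonal, no diagonal exhibits a unique maximizer and the point escapes every $W_k$. Resolving this is precisely where normality must be used in earnest — one breaks ties by a fixed secondary ordering of the index pairs and then thickens the resulting (a priori non-open) regions to genuinely separated open sets while preserving the covering condition. Verifying that this thickening can be carried out simultaneously for all diagonals, keeping same-diagonal pieces disjoint while the whole family still covers $X$, is the crux of the argument; once the $G_{ij}$ are in hand, the inheritance of (A) and (B) by disjoint unions finishes the proof at once.
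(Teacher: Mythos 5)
The paper itself gives no proof of this lemma --- it is quoted from Oprea--Strom \cite{OS} --- and the known argument is essentially the one you set up: form partitions of unity $\{\phi_i\}$, $\{\psi_j\}$ subordinate to $\mathcal{U}$, $\mathcal{V}$ (this is the only place normality enters), take the products $\phi_i\psi_j$, which form a partition of unity subordinate to $\{U_i\cap V_j\}$, define
\[
G_{ij}=\bigl\{x\in X:\ \phi_i(x)\psi_j(x)>0 \ \text{ and } \ \phi_i(x)\psi_j(x)>\phi_{i'}(x)\psi_{j'}(x) \ \text{ for all } (i',j')\neq(i,j) \text{ with } i'+j'=i+j\bigr\},
\]
and set $W_k=\bigcup_{i+j=k}G_{ij}$. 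These $G_{ij}$ are open (finitely many strict inequalities among continuous functions), contained in $U_i\cap V_j$, and pairwise disjoint along each diagonal, so each $W_k$ inherits (A) and (B) as a disjoint union; your reduction is correct as far as it goes.

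The genuine gap is that you never prove $\{W_k\}$ covers $X$: you defer exactly this as ``the crux,'' and the repair you sketch (a secondary ordering plus thickening of non-open regions) is not an argument and is in fact unnecessary. Moreover, the worry motivating it is mistaken: a tie between two same-diagonal pairs does \emph{not} make the point escape every $W_k$. Here is the missing step. Fix $x$; let $a=\max_i\phi_i(x)>0$, $b=\max_j\psi_j(x)>0$, let $I$, $J$ be the index sets attaining these maxima, and put $i_0=\max I$, $j_0=\max J$. The pairs attaining the global maximum $ab$ of the products at $x$ are exactly those in $I\times J$: if, say, $\phi_{i'}(x)<a$, then $\phi_{i'}(x)\psi_{j'}(x)\le \phi_{i'}(x)\,b<ab$. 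No element of $I\times J$ other than $(i_0,j_0)$ lies on the diagonal $i_0+j_0$, since $i'\le i_0$, $j'\le j_0$ and $i'+j'=i_0+j_0$ force $(i',j')=(i_0,j_0)$. Hence $(i_0,j_0)$ strictly dominates every other pair on its diagonal at $x$, i.e.\ $x\in G_{i_0j_0}\subseteq W_{i_0+j_0}$. With this tie-breaking observation your construction closes completely --- no thickening, and no use of normality beyond the existence of the partitions of unity.
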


\begin{theorem} Let $X$ be a normal spaceand $n,m \in \mathbb{Z}^+$ with $n\leq m$. If $f_1, \cdots, f_n, g_1, \cdots, g_m, h_1, \cdots, h_m: X \rightarrow Y$ are maps, then
\[
\D(f_1, \cdots, f_n, h_1, \cdots, h_m)\leq \D(f_1, \cdots, f_n, g_{\sigma(1)}, \cdots, g_{\sigma(s)})+\D(g_{\beta(1)}, \cdots, g_{\beta(s')}, h_1, \cdots, h_m)
\]

\noindent where $\sigma$ and $\beta$ is a permutation of $\{1, 2, \cdots, m\}$ and $\{1, 2, \cdots, n\}$ respectively, such that $g_{\sigma(i_0)}\simeq g_{\beta(j_0)}$ for some $i_0\in \{1, 2, \cdots, m\}$ and $j_0 \in \{1, 2, \cdots, n\}$. 
\end{theorem}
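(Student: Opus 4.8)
The plan is to reduce the statement to the Ostrand-type combination principle of Lemma \ref{OS}, using the hypothesis $g_{\sigma(i_0)}\simeq g_{\beta(j_0)}$ as a bridge that links the two given distances. First I would set $p=\D(f_1, \cdots, f_n, g_{\sigma(1)}, \cdots, g_{\sigma(s)})$ and $q=\D(g_{\beta(1)}, \cdots, g_{\beta(s')}, h_1, \cdots, h_m)$, assuming both are finite since otherwise the inequality is vacuous. By definition there is an open cover $\{A_0, \ldots, A_p\}$ of $X$ on each member of which the maps $f_1, \ldots, f_n$ and $g_{\sigma(1)}, \ldots, g_{\sigma(s)}$ are all mutually homotopic after restriction, and an open cover $\{B_0, \ldots, B_q\}$ on each member of which $g_{\beta(1)}, \ldots, g_{\beta(s')}$ and $h_1, \ldots, h_m$ are all mutually homotopic after restriction.

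Next I would introduce two properties of an open set $U\subseteq X$: Property (A), that $f_1|_U\simeq\cdots\simeq f_n|_U\simeq g_{\sigma(i_0)}|_U$; and Property (B), that $g_{\beta(j_0)}|_U\simeq h_1|_U\simeq\cdots\simeq h_m|_U$. Each $A_i$ then satisfies (A) and each $B_j$ satisfies (B), because the relevant restrictions are among the mutually homotopic families on those sets. I would then verify the two hypotheses of Lemma \ref{OS}: both properties pass to open subsets (restrict the witnessing homotopies) and to disjoint unions of open sets (glue the homotopies over the disjoint pieces). Since $X$ is normal, Lemma \ref{OS} produces an open cover $\mathcal{W}=\{W_0, \ldots, W_{p+q}\}$ of $X$ each member of which satisfies both (A) and (B) simultaneously.

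Finally I would chain the homotopies on each $W_k$: combining (A), (B), and the restriction to $W_k$ of the global homotopy $g_{\sigma(i_0)}\simeq g_{\beta(j_0)}$ gives
\[
f_1|_{W_k}\simeq\cdots\simeq f_n|_{W_k}\simeq g_{\sigma(i_0)}|_{W_k}\simeq g_{\beta(j_0)}|_{W_k}\simeq h_1|_{W_k}\simeq\cdots\simeq h_m|_{W_k},
\]
so all of $f_1, \ldots, f_n, h_1, \ldots, h_m$ are mutually homotopic on every $W_k$. Because $\mathcal{W}$ consists of $p+q+1$ sets, the definition of higher homotopic distance yields $\D(f_1, \cdots, f_n, h_1, \cdots, h_m)\leq p+q$, which is exactly the claimed inequality.

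The hard part will not be the homotopy chaining, which is routine once the bridge map $g_{\sigma(i_0)}\simeq g_{\beta(j_0)}$ is available, but rather controlling the cardinality of the combined cover: naive intersection of the two covers gives $(p+1)(q+1)$ sets instead of the required $p+q+1$. This is precisely the role of Lemma \ref{OS}, so the genuine work of the proof is to isolate Properties (A) and (B) correctly and to check that they are inherited by open subsets and disjoint unions, so that the lemma applies on the normal space $X$.
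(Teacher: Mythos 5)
Your proposal is correct and follows essentially the same route as the paper's proof: both set up the two covers from the given distances, invoke Lemma~\ref{OS} on the normal space $X$ to merge them into a cover of cardinality $k_1+k_2+1$, and use the hypothesis $g_{\sigma(i_0)}\simeq g_{\beta(j_0)}$ as the bridge to chain the homotopies on each resulting set. In fact your write-up is more careful than the paper's, since you explicitly isolate Properties (A) and (B) and note that their inheritance under open subsets and disjoint unions must be verified before Lemma~\ref{OS} can be applied.
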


\begin{proof} Let $\D(f_1, \cdots, f_n, g_{\sigma(1)}, \cdots, g_{\sigma(s)})=k_1$ and $\D(g_{\beta(1)}, \cdots, g_{\beta(s')}, h_1, \cdots, h_m)=k_2$. So there exists an open covering $\mathcal{U}=\{U_0, \cdots, U_{k_1}\}$ of $X$ such that $f_1 \big|_{U_i}\simeq \cdots \simeq f_n \big|_{U_i}\simeq g_{\sigma(1)}\big|_{U_i}\simeq \cdots g_{\sigma(s)}\big|_{U_i}$ for all $i=0, 1, \cdots, k_1$. Similarly there exists an open covering $\mathcal{V}=\{V_0, \cdots, V_{k_2}\}$ such that $g_{\beta(1)}\big|_{V_j}\simeq \cdots g_{\sigma(s')}\big|_{V_j} h_1 \big|_{V_j} \simeq h_m \big|_{V_j}$ for all $j=0, 1, \cdots, k_2$.

From the assumption, $g_{\sigma(i_0)}\simeq g_{\beta(j_0)}$ for some $i_0$ and $j_0$, and by Lemma~\ref{OS}, we have an open covering $\mathcal{W}=\{W_0, \cdots, W_{k_1 + k_2}\}$ of $X$ such that $f_1 \big|_{W_k}\simeq \cdots \simeq f_n \big|_{W_k} \simeq g_1 \big|_{W_k}\simeq \cdots \simeq g_m \big|_{W_k} \simeq h_1 \big|_{W_k}\simeq \cdots \simeq h_m \big|_{W_k}$ for all $k=0, 1, \cdots, k_1 + k_2$. 
\end{proof}

%%%%%%%%%%%%%%%%%%%%%%%%%%%%%%%%%%%%%%%%%%%%%%%%%%%%%
\section{$\TC_n$, $\cat$, $\secat$ of a fibration and $n$-th homotopic distance}
%%%%%%%%%%%%%%%%%%%%%%%%%%%%%%%%%%%%%%%%%%%%%%%%%%%%%

In the first half of this section we will introduce the relation between $n$-th homotopic distance with $\cat$ and with $\TC_n$ and with $\secat$ of a fibration. In the second half, we will prove some properties of $\TC_n$ such as its relation with $\cat$, via higher homotopic distance. 

Let us begin this section by recalling the definitions of $\cat$, $\secat$ and $\TC_n$. 

\begin{definition} \cite{CLOT} Lusternik Schnirelmann category $\cat(X)$ of a space $X$ is the least integer $k\geq 0$ such that there exists an open covering $\{U_0, U_1, \cdots, U_k\}$ of $X$ with the property that the inclusion on each $U_i$ is null-homotopic. Such an open $U_i$ is usually called  \textit{categorical}.
\end{definition}

\begin{definition} \cite{S} Sectional category $\secat(q)$ of a fibration $q: E \rightarrow B$ is the least integer $k\geq 0$ such that there exists an open covering $\{U_0, U_1, \cdots, U_k\}$ of $B$ such that there is a section $s_i$ over $U_i$ for every $i = 0, 1, \cdots, k$.
\end{definition}

A special case of the sectional category, $\TC_n$, is defined as follows.

\begin{definition}\label{D}\cite{R} For $n \in \mathbb{N}$, let $J_{n}$ be the wedge sum of $n$ closed intervals $[0,1]_{i}$ for $i=1,2,\cdots,n$ where the zeros $0_{i}$'s are identified. For a path-connected space $X$, denote by $X^{J_{n}}$ the space of paths with $n$-legs. Then there is a fibration $e_{n}:X^{J_{n}}\to X^{n}$ defined by $e_{n}(f)=(f(1_{1}),f(1_{2}), \cdots, f(1_{n}))$, and the sectional category (or Schwarz genus) of this fibration is called $n$-dimensional topological complexity of $X$, denoted by $\TC_{n}(X)$. 

If there is no such a covering, we write $\TC_{n}(X)=\infty$.
\end{definition}

\begin{theorem}\label{TV} For a fixed $x_0 \in X$, consider the inclusions $j_i: X^{n-1}\hookrightarrow X^n$ given by  $j_i(x_1, \cdots, x_{n-1})=(x_1, \cdots, x_{i-1}, x_0, x_i, x_{i+1}, \cdots, x_{n-1})$ for $i\in \{1,2, \cdots, n\}$. Then $D(j_1, \cdots, j_n)=\cat(X^{n-1})$.
\end{theorem}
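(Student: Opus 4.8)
The plan is to show two inequalities, $\D(j_1,\dots,j_n)\leq\cat(X^{n-1})$ and $\cat(X^{n-1})\leq\D(j_1,\dots,j_n)$, by transferring open covers back and forth between $X^{n-1}$ and $X^n$ and exploiting the rigid combinatorial structure of the maps $j_i$. The crucial observation to establish first is that on any subset $U\subseteq X^{n-1}$, the condition $j_\ell|_U\simeq j_m|_U$ for $\ell\neq m$ is equivalent to $U$ being categorical, i.e.\ the inclusion $U\hookrightarrow X^{n-1}$ being null-homotopic. Indeed, $j_\ell$ and $j_m$ differ only in which slot receives the constant value $x_0$ and which slots carry the identity coordinates, so a homotopy between $j_\ell|_U$ and $j_m|_U$ forces the corresponding coordinate projections of $U$ to be deformable to the constant point $x_0$; conversely a null-homotopy of $U\hookrightarrow X^{n-1}$ lets one slide every coordinate to $x_0$ and thereby connect all the $j_i|_U$ simultaneously.

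For the inequality $\D(j_1,\dots,j_n)\leq\cat(X^{n-1})$, I would start from a categorical open cover $\{U_0,\dots,U_k\}$ of $X^{n-1}$ with $k=\cat(X^{n-1})$. By the equivalence just described, each $U_i$ satisfies $j_1|_{U_i}\simeq j_2|_{U_i}\simeq\cdots\simeq j_n|_{U_i}$, so the very same cover witnesses $\D(j_1,\dots,j_n)\leq k$. The content here is entirely in verifying the homotopies, which reduces to checking that a contraction of $U_i$ to $x_0$ in $X^{n-1}$ induces, coordinate by coordinate, the required chain of homotopies among the $j_i|_{U_i}$ in $X^n$.

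For the reverse inequality $\cat(X^{n-1})\leq\D(j_1,\dots,j_n)$, I would take an open cover $\{V_0,\dots,V_\ell\}$ of $X^{n-1}$ realizing $\D(j_1,\dots,j_n)=\ell$, so that on each $V_j$ all of $j_1|_{V_j},\dots,j_n|_{V_j}$ are mutually homotopic. Using the equivalence in the other direction, each such $V_j$ must be categorical in $X^{n-1}$: from $j_1|_{V_j}\simeq j_n|_{V_j}$ one extracts a homotopy that moves the identity coordinate in slot $1$ to the constant $x_0$, and composing the chain of homotopies across all $n$ maps lets one contract every coordinate of $V_j$ to $x_0$, exhibiting a null-homotopy of the inclusion $V_j\hookrightarrow X^{n-1}$. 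Hence $\{V_0,\dots,V_\ell\}$ is a categorical cover and $\cat(X^{n-1})\leq\ell$.

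The main obstacle I expect is the careful bookkeeping in the key equivalence: because each $j_i$ inserts $x_0$ in a different slot and shifts the remaining coordinates, one must track precisely how a homotopy in $X^n$ between $j_\ell|_U$ and $j_m|_U$ decomposes across the $n$ product coordinates and confirm that it genuinely yields a contraction of $U$ rather than merely a homotopy between two different embeddings. I would isolate this as a separate lemma, treating the coordinatewise analysis once and for all, and then both inequalities follow cleanly by reusing the same open cover. The permutation-invariance (Proposition making $\D$ symmetric in its arguments) and Proposition~\ref{A} on homotopy invariance of $\D$ are convenient for reordering and replacing maps up to homotopy while carrying out this argument.
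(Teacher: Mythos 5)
Your plan is correct and is essentially the paper's own proof: one inequality transfers a categorical cover by sliding every coordinate to $x_0$ along the contraction (the paper concatenates $j_i\circ F$ with the reverse of $j_k\circ F$), and the reverse inequality assembles, coordinate by coordinate, the contractions $\pr_i\circ H_{i-1}$ extracted from the chain of homotopies $H_{i-1}\colon j_{i-1}|_U\simeq j_{i}|_U$, exactly as in the paper. One caution for the write-up: your ``key equivalence'' is only true when read as requiring the whole family (all pairs, or at least the $n-1$ consecutive pairs) to be homotopic on $U$, since a single homotopy $j_\ell|_U\simeq j_m|_U$ with $\ell<m$ contracts only the coordinate projections $\ell$ and $m-1$; for instance, with $X=S^1$, $n=3$ and $U=V\times S^1$ (where $V$ is a contractible neighborhood of $x_0$), one has $j_1|_U\simeq j_2|_U$ although $U$ is not categorical in $X^2$.
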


\begin{proof} Let $U\subset X^{n-1}$ be categorical. 

Our aim is to show that each $j_i$ restricted on $U$ is homotopic with each other for $i\in \{1,2, \cdots, n\}$. It suffices to show that for any distinct $i, k \in \{1, \cdots, n\}$, we have $j_i|_{U}\simeq j_k|_{U}$.

For convenience, we will write $x=(x_1, \cdots, x_n)$. Consider the homotopy 
\[
F:U\times I\rightarrow X^{n-1}, \text{ satisfying } F(x,0)=x \text{ and } F(x,1)=(x_0, \cdots, x_0). 
\]

Define the map $H:U\times I\rightarrow X^{n}$ by 
\[
H(x,t)=\begin{cases} 
      j_i(F(x,2t)), & \hspace{0.1in} 0\leq t\leq \frac{1}{2}	 \\
      j_k(F(x,2-2t)), & \hspace{0.1in}  \frac{1}{2}\leq t\leq 1	
   \end{cases}
\]

\noindent so that $H(x,0)=j_i(x)$, $H(x,1)=j_k(x)$ are satisfied. Since 
\[
H\Big(x,\frac{1}{2}\Big)=j_i(F(x,1))=j_i(x_0,\cdots, x_0)=(x_0, \cdots, x_0)=j_k(F(x,1)),
\]

\noindent $H$ is continuous. Hence $\D(j_1, \cdots, j_n)\leq \cat(X^{n-1})$. 

For the other way around, assume that we have ${j_1}|_U \simeq \cdots \simeq {j_n}|_U$, i.e., there exists a homotopy $H_i: U\times I\rightarrow X^n$ such that $H_i(x,0)=j_i(x)$ and $H_i(x,1)=j_{i+1}(x)$ for each $i \in \{1, \cdots, n-1\}$.

Define $F:U\times I\rightarrow X^{n-1}$ by
\[
F(x_1, \cdots, x_{n-1},t)=\big( \pr_2(H_1(x,t)), \cdots, \pr_{i}(H_{i-1}(x,t)), \cdots, \pr_n(H_{n-1}(x,t)) \big) 
\]

\noindent where $\pr_i:X^n \rightarrow X$ is the projection maps into the $i$-th factor and which satisfies 
\begin{align*}
F(x,0) &= \big( \pr_2(H_1(x,0)), \cdots, \pr_{i}(H_{i-1}(x,0)), \cdots, \pr_n(H_{n-1}(x,0)) \big) \\
 &= \big( \pr_2(j_1(x)), \cdots, \pr_{i}(j_{i-1}(x)), \cdots, \pr_n(j_{n-1}(x)) \big)  \\
 &= (x_1, \cdots, x_n) 
\end{align*}
and 
\begin{align*}
F(x,1) &= \big( \pr_2(j_2(x)), \cdots, \pr_{i}(j_{i}(x)), \cdots, \pr_n(j_{n}(x)) \\
&= (x_0, \cdots, x_0). 
\end{align*}
Thus $\cat(X^{n-1})\leq \D(j_1, \cdots, j_n)$.
\end{proof}

\begin{theorem}\label{T28} Let $f_1, f_2, \cdots, f_n: X\rightarrow Y$ be maps. Consider the fibration $e_n:Y^{J_n}\rightarrow Y^n$ as defined in Definition~\ref{D}. If $q: P\rightarrow X$ is the pullback of the fibration $e_n$ by the map $F:=(f_1, f_2, \cdots, f_n): X \rightarrow Y^n$, then $\D(f_1, f_2, \cdots, f_n)=\secat(q)$.

\[
\begin{tikzcd}
P \arrow[r] \arrow[d, "q"]
& Y^{J_n} \arrow[d, "e_n"] \\
X \arrow[r,  "F" ]
& Y^n
\end{tikzcd}
\]
\end{theorem}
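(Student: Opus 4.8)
The plan is to reduce the entire statement to a single set-level equivalence: for any open set $U\subseteq X$, a section of $q$ over $U$ exists if and only if $f_1|_U\simeq f_2|_U\simeq\cdots\simeq f_n|_U$. Once this is in hand the theorem is immediate, since $\secat(q)$ is the least $k$ for which $X$ admits an open cover $\{U_0,\dots,U_k\}$ with a section of $q$ over each $U_i$, whereas $\D(f_1,\dots,f_n)$ is the least $k$ for which $X$ admits an open cover with $f_1|_{U_i}\simeq\cdots\simeq f_n|_{U_i}$ on each piece; the equivalence says these two families of admissible covers coincide, so the two minima agree.

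First I would translate sections into lifts. By the universal property of the pullback, a section $s\colon U\to P$ of $q$ is the same datum as a map $\sigma\colon U\to Y^{J_n}$ satisfying $e_n\circ\sigma=F|_U$. Unwinding the definition of $Y^{J_n}$ as the space of $n$-legged paths, giving such a $\sigma$ amounts to assigning to each $x\in U$ an $n$-tuple of paths $(\alpha^x_1,\dots,\alpha^x_n)$ in $Y$ that share a common initial point $c(x):=\alpha^x_1(0)=\cdots=\alpha^x_n(0)$ and satisfy $\alpha^x_i(1)=f_i(x)$, all depending continuously on $x$.

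For the forward implication, from such a $\sigma$ the assignment $(x,t)\mapsto\alpha^x_i(t)$ is a homotopy from the map $c\colon U\to Y$, $x\mapsto c(x)$, to $f_i|_U$. Hence every $f_i|_U$ is homotopic to the single map $c$, and therefore the $f_i|_U$ are all homotopic to one another. For the converse, suppose $f_1|_U\simeq\cdots\simeq f_n|_U$. Composing the given homotopies, I obtain for each $i$ a homotopy $G_i\colon U\times I\to Y$ with $G_i(x,0)=f_1(x)$ and $G_i(x,1)=f_i(x)$, taking $G_1$ to be the constant homotopy at $f_1$. Defining $\sigma(x)$ to be the $n$-legged path whose $i$-th leg is $t\mapsto G_i(x,t)$ produces paths with the common initial point $c(x)=f_1(x)$ and terminal points $f_i(x)$, so that $e_n(\sigma(x))=F(x)$ and $\sigma$ is a lift of $F|_U$.

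The step I expect to be the main obstacle is this converse construction: a bare collection of pairwise homotopies $f_i|_U\simeq f_{i+1}|_U$ does not automatically furnish legs emanating from a single common point, which is exactly what membership in $Y^{J_n}$ demands. The idea that resolves it is to reroute every homotopy through $f_1|_U$, so that all $n$ legs share the basepoint $f_1(x)$; continuity of $\sigma$ in $x$ then follows from continuity of the $G_i$ together with the exponential description of $Y^{J_n}$.
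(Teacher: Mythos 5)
Your proposal is correct and follows essentially the same route as the paper's proof: both directions rest on translating a section of the pullback over $U$ into an $n$-legged path family, i.e., homotopies from a single common map to each $f_i|_U$ (the paper reroutes through $g(x)=\widetilde{H}_1(x,\tfrac12)$ where you reroute through $f_1$, an immaterial difference). Your packaging of the argument as a single open-set-level equivalence, from which the equality of the two minima follows at once, is a slightly cleaner organization of the same idea.
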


\begin{proof} Before we start proving the theorem, observe that 
\begin{align*}
P&= \{(x,\gamma)\in X\times Y^{J_n} \ \big| \  e_n(\gamma)= F(x)\} \\
&= \{(x,\gamma)\in X\times Y^{J_n} \  \big| \  \gamma_i(1_i)=f_i(x) \text{ for all } i=1, 2, \cdots, n\}.
\end{align*}

Suppose $\D(f_1, f_2, \cdots, f_n)=k$. Then there exists an open covering $\{U_0, U_1, \cdots, U_k\}$ of $X$ such that $f_1\big|_{U_j}\simeq f_2\big|_{U_j}\simeq \cdots \simeq f_n\big|_{U_j}$ for all $j\in \{0, 1, \cdots, k\}$. 

For each $U_j$, we have the homotopies 
\[ \widetilde{H}_1: f_1\big|_{U_j} \simeq f_2\big|_{U_j}
\]
\[
\cdots
\]
\[ \widetilde{H}_{n-1}: f_{n-1}\big|_{U_j} \simeq f_n\big|_{U_j}. 
\]

Fix one of the homotopies, say $\widetilde{H}_1$. Write the function $\widetilde{H}_1(x,\frac{1}{2})=:g(x)$. So there are new homotopies 
\[ {H}_1: g\big|_{U_j} \simeq f_1\big|_{U_j}
\]
\[
\cdots
\]
\[ {H}_{n}: g\big|_{U_j} \simeq f_n\big|_{U_j}. 
\]

Define a continuous map  

\begin{align*}
s_j :  U_j & \longrightarrow Y^n  \longrightarrow  Y^{J_n} \\ 
 x & \mapsto  F(x)  \mapsto  \beta_x
\end{align*}

\noindent where $\beta_x: J_n \rightarrow Y$ defined by $\beta_x\big|_{I_i}=H_i$ with $\beta_x(0_1)=\cdots = \beta_x(0_n)=g(x)$ are glued.

Since 
\[
\beta_x(1_i)=H_i(x,1_i)= f_i(x) \text{ for each } i = 1, 2, \cdots, n, 
\]

\noindent each $s_j$ is  a section of $q$ over $U_j$. Hence $\secat(q)\leq k$.
 
On the other way around, if $\secat(q)=k$ then we have an open covering $\{U_0, \cdots, U_k\}$ of $X$ such that there is a section $s_j:U_j\rightarrow P$ (i.e., $q\circ s_j=\id_{U_j}$) for each $j=0,1, \cdots, k$. 

Since $q$ maps to the first factor, $s_j: U_j \rightarrow P$ is defined by $s(x)=(x,\gamma_x)$ such that $\gamma_x$ satisfies 
\[
\gamma_x(1_1)= f_1(x), \gamma_x(1_2)= f_2(x), \cdots, \gamma_x(1_n)= f_n(x) 
\]
and 

\[
\gamma_x(0_1)= \gamma_x(0_2)= \cdots =  \gamma_x(0_n). 
\]

Let say $\gamma_x(0_i)=y_0=g(x)$ for some map $g: X\rightarrow Y$, for some (and all) $i$. 

Each $i$-th leg gives a path, denote it by $H^i_x:I_i\rightarrow Y$ satisfying $H^i_x(0)=g(x)$ and $H^j_x(1_i)=f_i(x)$. So over each $U_j$, all $f_i$'s are homotopic to each other. Thus $\D(f_1, \cdots, f_n)\leq k$. 

\end{proof}

\begin{corollary}\label{C36} If $f_1, f_2, \cdots, f_n: X\rightarrow Y$ are maps with path connected spaces $X$ and $Y$, then 
\[
\D(f_1, f_2, \cdots, f_n)\leq \cat(X).
\] 

\end{corollary}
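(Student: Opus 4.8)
The plan is to deduce Corollary~\ref{C36} directly from Theorem~\ref{T28} together with the classical comparison between sectional category of a pullback fibration and the Lusternik--Schnirelmann category of the base. By Theorem~\ref{T28}, if $q:P\to X$ denotes the pullback of $e_n:Y^{J_n}\to Y^n$ along $F=(f_1,\dots,f_n):X\to Y^n$, then $\D(f_1,\dots,f_n)=\secat(q)$. Hence it suffices to show $\secat(q)\le\cat(X)$. First I would recall the standard fact that for any fibration $q:P\to X$ one has $\secat(q)\le\cat(X)$: if $U\subseteq X$ is categorical, then the inclusion $U\hookrightarrow X$ is null-homotopic, so it factors up to homotopy through a point, over which the fibration admits a section (the fiber is nonempty since $Y^{J_n}$ is nonempty for path-connected $Y$); pulling this back along the null-homotopy produces a section of $q$ over $U$ by the homotopy lifting property.

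In more concrete terms, I would argue as follows. Take a categorical open cover $\{U_0,\dots,U_k\}$ of $X$ with $k=\cat(X)$, so that on each $U_j$ the inclusion $\iota_j:U_j\hookrightarrow X$ is homotopic to a constant map $c_j$ sending $U_j$ to some basepoint $x_j\in X$. Composing with $F$, the map $F\circ\iota_j=F\big|_{U_j}$ is homotopic to the constant map $F\circ c_j$ at the point $F(x_j)\in Y^n$. Over a constant map into $Y^n$, the fibration $e_n$ plainly admits a section: one simply chooses a fixed $n$-legged path $\gamma$ with $e_n(\gamma)=F(x_j)$, which exists because $Y$ is path-connected, so every coordinate $f_i(x_j)$ can be joined to a common point $x_0$ by a path forming the $i$-th leg. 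This gives a section of $q$ over $U_j$ after the constant map is modified back to $F\big|_{U_j}$ via the homotopy lifting property applied to the homotopy $F\circ\iota_j\simeq F\circ c_j$.

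The one technical point that requires care is the passage from a section over the \emph{constant} map to a section over $F\big|_{U_j}$ itself, and I expect this to be the main obstacle. The resolution is the homotopy lifting property of the fibration $e_n$: given the homotopy $G:U_j\times I\to Y^n$ with $G_0=F\circ c_j$ and $G_1=F\big|_{U_j}$, together with the section $\sigma$ over $G_0$ viewed as a lift $U_j\times\{0\}\to Y^{J_n}$, one lifts $G$ to $\widetilde{G}:U_j\times I\to Y^{J_n}$ with $\widetilde{G}_0=\sigma$, and then $\widetilde{G}_1$ is a partial section of $e_n$ over $F\big|_{U_j}$, which is exactly a section $s_j:U_j\to P$ of the pullback $q$. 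Since we obtain such a section over each of the $k+1$ sets $U_0,\dots,U_k$, we conclude $\secat(q)\le k=\cat(X)$, and therefore $\D(f_1,\dots,f_n)=\secat(q)\le\cat(X)$.
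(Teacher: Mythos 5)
Your proposal is correct and follows essentially the same route as the paper: reduce via Theorem~\ref{T28} to showing $\secat(q)\leq\cat(X)$, then convert the categorical null-homotopies into local sections using the homotopy lifting property. The only cosmetic difference is that you lift against $e_n$ and pass back through the pullback correspondence, whereas the paper lifts against $q$ directly; in fact you are more careful than the paper on one point, namely the existence of the initial lift over the constant map, which is where the path-connectedness of $Y$ is genuinely needed.
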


\begin{proof} Suppose $\cat(X)=k$. Then there exists an open covering $\{U_0, U_1, \cdots, U_k\}$ of $X$ such that $\id\big|_{U_i}\simeq c\big|_{U_i}$ for all $i\in\{0, 1, \cdots, k\}$ where $c$ is a constant map. Let say $H_i$ is the homotopy between these maps for each $i$ with $H_i(x,0)=\id\big|_{U_i}(x)$.

\[
\begin{tikzcd}
& P \arrow[r] \arrow[d, "q"] & Y^{J_n} \arrow[d, "e_n"] \\
U_i \times I \arrow[r, "H_i"]   \arrow{ur}{\widetilde{H}_i}  & X \arrow[r, "F"]  & Y^n
\end{tikzcd}
\]

By the homotopy lifting property, there exists $\widetilde{H}_i$ such that $q\circ \widetilde{H}_i=H_i$ and $H_i(x,0)=\id\big|_{U_i}$ is lifted to some $\widetilde{f}_i$. If we choose the sections as $\widetilde{f}_i$ for each $i$, then we have $q\circ \widetilde{f}_i=\id\big|_{U_i}$ as required. Hence this shows that $\secat(q) \leq k$. Summarizing, we have $\D(f_1, f_2, \cdots, f_n)=\secat(q) \leq \cat(X)$ where the equality follows from Theorem~\ref{T28}.
\end{proof}

\begin{remark} Notice that the necessary condition in Theorem~\ref{TV} also follows from Corollary~\ref{C36}, if we take $f_i=j_i: X^{n-1} \rightarrow X^n$. 
\end{remark}

\begin{corollary} If $f_1, f_2, \cdots, f_n: X\rightarrow Y$ are maps, then $\D(f_1, f_2, \cdots, f_n)\leq \TC_n(Y)$. 
\end{corollary}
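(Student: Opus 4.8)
The plan is to deduce this directly from Theorem~\ref{T28} together with the fact that sectional category cannot increase under pullback. By Theorem~\ref{T28}, with $F=(f_1,\dots,f_n):X\to Y^n$ and $q:P\to X$ the pullback of $e_n:Y^{J_n}\to Y^n$ along $F$, we have $\D(f_1,\dots,f_n)=\secat(q)$. On the other hand, $\TC_n(Y)=\secat(e_n)$ by Definition~\ref{D}. Thus it suffices to prove the inequality $\secat(q)\leq\secat(e_n)$.

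To establish this, suppose $\secat(e_n)=k$, so there is an open covering $\{W_0,W_1,\dots,W_k\}$ of $Y^n$ and, for each $j$, a section $\sigma_j:W_j\to Y^{J_n}$ of $e_n$, that is $e_n\circ\sigma_j=\id_{W_j}$. Put $U_j:=F^{-1}(W_j)\subseteq X$. Since $F$ is continuous and the $W_j$ cover $Y^n$, the collection $\{U_0,\dots,U_k\}$ is an open covering of $X$. For each $j$ I would then define
\[
s_j:U_j\to P,\qquad s_j(x)=\bigl(x,\sigma_j(F(x))\bigr).
\]
The point to check is that $s_j$ actually lands in $P=\{(x,\gamma)\in X\times Y^{J_n}\mid e_n(\gamma)=F(x)\}$: for $x\in U_j$ we have $F(x)\in W_j$, hence $e_n(\sigma_j(F(x)))=F(x)$, so the pair $(x,\sigma_j(F(x)))$ satisfies the defining relation of the pullback. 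Continuity of $s_j$ is immediate from continuity of $F$ and $\sigma_j$ together with the universal property of $P$, and $q\circ s_j=\id_{U_j}$ because $q$ is the projection onto the first coordinate. Hence each $s_j$ is a section of $q$ over $U_j$, giving $\secat(q)\leq k$.

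Combining these facts yields $\D(f_1,\dots,f_n)=\secat(q)\leq\secat(e_n)=\TC_n(Y)$, as claimed. I do not expect any real obstacle here: the entire content is the routine verification that pulling back the covering $\{W_j\}$ along $F$ and postcomposing the sections $\sigma_j$ with $F$ produces local sections of the pullback fibration $q$. The only place demanding a little care is confirming the compatibility condition $e_n(\sigma_j(F(x)))=F(x)$ that places $s_j(x)$ inside $P$; everything else is formal. Note also that this gives a second, cleaner route to the content of the preceding remark, since it does not single out any particular basepoint or family of inclusions.
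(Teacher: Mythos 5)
Your proposal is correct and follows essentially the same route as the paper: both reduce the statement to Theorem~\ref{T28} combined with the inequality $\secat(q)\leq\secat(e_n)$ for the pullback fibration. The only difference is that the paper simply cites \cite{MVML} for this pullback inequality, whereas you verify it explicitly by pulling back the open cover along $F$ and composing the sections with $F$ --- a standard and correct argument.
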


\begin{proof} If the pullback of $e_n:Y^{J_n}\rightarrow Y^n$ is the fibration $q:P\rightarrow X$, then as mentioned in \cite{MVML}, $\secat(q)\leq \secat(e_n)$. Hence by Theorem~\ref{T28}, $\D(f_1, f_2, \cdots, f_n)=\secat(q)\leq \TC_n(Y)$.
\end{proof}

\begin{lemma}\label{L}\cite{B}  Let $X$ be a path-connected space and consider the fibration $e_n$ as described in Definition~\ref{D}. Let $U\subseteq X$ be an open. Then there is a section $s:U\to X^{J_n}$ of $e_n$ if and only if each composition $f_i:U \xhookrightarrow{\iota} X^n  \xrightarrow{\pr_i} X $ is homotopic to some map $g\colon U \to X$, where $\pr_i:X^n \rightarrow X$ is the projection to the $i$-th factor. 
\end{lemma}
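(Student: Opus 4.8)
The plan is to reduce the statement to the exponential adjunction for mapping spaces. Since $J_n$ is a wedge of finitely many compact intervals, it is compact Hausdorff, and the exponential law gives a bijection between continuous maps $U \to X^{J_n}$ and continuous maps $U \times J_n \to X$. Under this correspondence a map $s \colon U \to X^{J_n}$ sends $u$ to the $n$-legged path $t \mapsto \hat s(u,t)$, and the condition $e_n \circ s = \iota$ (that $s$ is a section of $e_n$ over $U$) translates into $\hat s(u, 1_i) = \pr_i(u) = f_i(u)$ for every $i$ and every $u \in U$. Both implications will be established by passing back and forth through this adjoint $\hat s$, reading $f_i = \pr_i \circ \iota$ with $\iota \colon U \hookrightarrow X^n$ the inclusion.

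First I would prove the forward direction. Given a section $s$, let $\hat s \colon U \times J_n \to X$ be its adjoint. Define $g \colon U \to X$ by $g(u) = \hat s(u, 0_1) = \cdots = \hat s(u, 0_n)$, the common value of $\hat s$ at the wedge point; this is continuous, being the composite of $\hat s$ with $u \mapsto (u,0)$. For each $i$ set $H_i(u,t) = \hat s(u, t_i)$, where $t_i$ is the parameter along the $i$-th leg $I_i$. Then $H_i$ is continuous, $H_i(u, 0) = g(u)$, and $H_i(u, 1) = \hat s(u, 1_i) = f_i(u)$, so $H_i$ is a homotopy from $g$ to $f_i$. Hence every $f_i$ is homotopic to the single map $g$, as required.

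Conversely, suppose each $f_i$ is homotopic to a common map $g$, witnessed by homotopies $H_i \colon U \times I \to X$ normalized so that $H_i(u,0) = g(u)$ and $H_i(u,1) = f_i(u)$. I would glue these into one map $\hat s \colon U \times J_n \to X$ by declaring $\hat s|_{U \times I_i} = H_i$. This is well defined along the wedge slice $U \times \{0\}$ precisely because all the $H_i$ agree there, each equal to $g$. Taking the adjoint $s \colon U \to X^{J_n}$ of $\hat s$ then gives a map with $\hat s(u, 1_i) = f_i(u) = \pr_i(u)$, so $e_n(s(u)) = (f_1(u), \ldots, f_n(u)) = u$; that is, $s$ is a section of $e_n$ over $U$.

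The only genuine point to check---and the step I expect to require the most care---is continuity. For the backward direction I would invoke the pasting lemma: $U \times J_n$ is covered by the closed sets $U \times I_i$, which meet pairwise only in $U \times \{0\}$, and on that overlap the pieces $H_i$ all take the value $g$, so the glued map $\hat s$ is continuous; the adjoint $s$ is then automatically continuous. For the forward implication the same adjunction, which is valid since $J_n$ is locally compact Hausdorff, supplies the continuity of $\hat s$ and hence of $g$ and of each $H_i$. Path-connectedness of $X$ is not needed for the equivalence itself and enters only as the standing hypothesis ensuring that $e_n$ and $\TC_n$ are defined.
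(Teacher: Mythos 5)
Your proof is correct. Note that the paper itself offers no proof of this lemma---it is quoted from \cite{B}---but your adjunction-plus-gluing argument is exactly the technique the paper deploys inside its own proof of Theorem~\ref{T28}: renormalize the homotopies so they all start at a common map $g$, glue them along the legs of $J_n$ to produce a section, and conversely read the legs of a section as homotopies from $g$ to each $f_i$. Two points where you improve on the source: you correctly read the hypothesis as $U\subseteq X^n$ rather than the misprinted $U\subseteq X$, and you make explicit the continuity issues (the pasting lemma over the finitely many closed sets $U\times I_i$, and local compactness of $J_n$ for the exponential correspondence) that the paper leaves implicit.
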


The following theorem whose proof follows directly from Lemma~\ref{L} is introduced in \cite{MVML}. 

\begin{theorem}\cite{MVML}\label{T2} If $X$ is path-connected space and each map $\pr_i: X^n\rightarrow X$ projects to the $i$-th factor for $i\in \{1,2, \cdots, n\}$, then $\D(\pr_1,\pr_2, \cdots, \pr_n)=\TC_n(X)$.  \\
\end{theorem}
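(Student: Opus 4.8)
The plan is to recognize that Theorem~\ref{T2} is the special case of Theorem~\ref{T28} obtained by taking $X=Y$ and choosing $f_i=\pr_i\colon X^n\to X$ for each $i\in\{1,2,\dots,n\}$. With this choice the map $F=(f_1,\dots,f_n)\colon X^n\to X^n$ becomes $(\pr_1,\dots,\pr_n)=\id_{X^n}$, so the pullback of the fibration $e_n\colon X^{J_n}\to X^n$ along $F$ is $e_n$ itself (up to the canonical identification), and hence $\secat(q)=\secat(e_n)=\TC_n(X)$. Theorem~\ref{T28} then gives immediately $\D(\pr_1,\dots,\pr_n)=\secat(q)=\TC_n(X)$.

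First I would make the identification of the pullback precise. Writing out the pullback as in the proof of Theorem~\ref{T28}, one has
\[
P=\{(z,\gamma)\in X^n\times X^{J_n}\mid \gamma(1_i)=\pr_i(z)\text{ for all }i\}.
\]
Since $\gamma(1_i)=\pr_i(z)$ for every $i$ forces $z=(\gamma(1_1),\dots,\gamma(1_n))=e_n(\gamma)$, the coordinate $z$ is completely determined by $\gamma$, so the projection $(z,\gamma)\mapsto\gamma$ is a homeomorphism $P\xrightarrow{\sim}X^{J_n}$ under which $q$ corresponds to $e_n$. This is exactly the statement that pulling a fibration back along the identity returns the same fibration, and it shows $\secat(q)=\secat(e_n)$, which by Definition~\ref{D} equals $\TC_n(X)$.

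Alternatively, and perhaps more in the spirit of the surrounding lemmas, I would invoke Lemma~\ref{L} directly. For an open $U\subseteq X^n$, the maps $f_i=\pr_i\circ\iota\colon U\hookrightarrow X^n\xrightarrow{\pr_i}X$ appearing in Lemma~\ref{L} are precisely the restrictions $\pr_i|_U$; Lemma~\ref{L} says a section of $e_n$ exists over $U$ exactly when all these restrictions are homotopic to a common map $g$, i.e.\ exactly when $\pr_1|_U\simeq\cdots\simeq\pr_n|_U$. Comparing covers, an open cover of $X^n$ admitting local sections of $e_n$ on each piece is the same thing as an open cover witnessing $\D(\pr_1,\dots,\pr_n)$, so the two minimal cardinalities coincide and $\D(\pr_1,\dots,\pr_n)=\TC_n(X)$.

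I do not expect a serious obstacle here, since the statement is essentially a corollary of Theorem~\ref{T28} together with the base-point independence packaged into the definition of $e_n$. The only point requiring mild care is the identification $P\cong X^{J_n}$ over the base, i.e.\ verifying that the pullback along $F=\id_{X^n}$ really is $e_n$ itself so that $\secat(q)=\TC_n(X)$; once that canonical homeomorphism is noted, the result is immediate. The phrase \emph{``whose proof follows directly from Lemma~\ref{L}''} in the excerpt confirms that the intended route is the second one above, so I would present the Lemma~\ref{L} argument as the main line and mention the pullback identification as the conceptual reason it works.
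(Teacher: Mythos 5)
Your main line---identifying covers of $X^n$ that admit local sections of $e_n$ with covers on which the restrictions $\pr_i|_U$ are all mutually homotopic, via Lemma~\ref{L}---is exactly the paper's intended proof; the paper offers nothing beyond the remark that the theorem follows directly from that lemma, and your fleshed-out version of this step (including the transitivity point that ``all homotopic to a common $g$'' equals ``pairwise homotopic'') is correct. Your alternative route, specializing Theorem~\ref{T28} to $f_i=\pr_i$ so that $F=\id_{X^n}$ and the pullback $q$ is canonically isomorphic to $e_n$ itself, is also sound and is a nice observation that makes the theorem a genuine corollary of results already proved in the paper rather than of the externally cited Lemma~\ref{L}.
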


Although Corollary~\ref{R123} and Theorem~\ref{BGRT123} are already known, here we will give new alternative proofs using higher homotopic distance. 

\begin{corollary}\cite{R}\label{R123} $\TC_n(X)\leq \TC_{n+1}(X)$.
\end{corollary}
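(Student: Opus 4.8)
The plan is to leverage Theorem~\ref{T2}, which identifies $\TC_n(X)$ with the higher homotopic distance $\D(\pr_1, \pr_2, \cdots, \pr_n)$ of the projection maps $\pr_i \colon X^n \to X$, and likewise identifies $\TC_{n+1}(X)$ with $\D(\pr_1, \pr_2, \cdots, \pr_{n+1})$ for the projections $\pr_i \colon X^{n+1} \to X$. The key difficulty is that these two distances are computed for maps with different domains, namely $X^n$ versus $X^{n+1}$, so one cannot directly compare coverings. The strategy is to relate the two families of projections through a suitable map between the domains, and then invoke one of the composition inequalities established earlier, Proposition~\ref{P32}.

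First I would introduce the map $\delta \colon X^n \to X^{n+1}$ that duplicates one coordinate, for instance $\delta(x_1, \cdots, x_n) = (x_1, x_2, \cdots, x_n, x_n)$, or alternatively the inclusion induced by inserting a basepoint; the cleanest choice is a map for which the projections $\pr_i \colon X^{n+1} \to X$ precompose nicely with $\delta$ to recover the projections on $X^n$. With $\delta$ in hand, one checks that $\pr_i \circ \delta = \pr_i$ (for $i \le n$) and $\pr_{n+1} \circ \delta = \pr_n$, so that the composites $\pr_i \circ \delta$ reproduce exactly the $n$ projection maps on $X^n$ (possibly with one repeated). The idea is that precomposing the larger family of projections with $\delta$ collapses it onto the smaller family.

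Next I would apply Proposition~\ref{P32} with $f_i = \pr_i \colon X^{n+1} \to X$ and $h_i = \delta \colon X^n \to X^{n+1}$ for every $i$. Since all the $h_i$ are taken to be the single map $\delta$, the hypothesis $h_i \simeq h_{i+1}$ holds trivially (indeed with equality), and the proposition yields
\[
\D(\pr_1 \circ \delta, \cdots, \pr_{n+1} \circ \delta) \leq \D(\pr_1, \cdots, \pr_{n+1}).
\]
The left-hand side, by the coordinate identities above, equals $\D$ of the $n$ projections on $X^n$ (with one entry duplicated). Using Proposition~\ref{A} to replace duplicated or homotopic entries and the monotonicity proposition governing the number of maps, one concludes that this left-hand distance equals $\D(\pr_1, \cdots, \pr_n)$ on $X^n$. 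Combining everything with Theorem~\ref{T2} gives $\TC_n(X) = \D(\pr_1, \cdots, \pr_n) \leq \D(\pr_1, \cdots, \pr_{n+1}) = \TC_{n+1}(X)$.

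The main obstacle I anticipate is verifying that the chosen map $\delta$ genuinely realizes the smaller projection family upon composition, and reconciling the bookkeeping: the composites yield $n+1$ maps on $X^n$, one of which is a repeat, so I must argue carefully that the distance of these $n+1$ maps coincides with the distance of the $n$ distinct projections. This requires the symmetry of Proposition (permutation invariance) together with the observation that a repeated map adds no constraint, which should follow by a short argument from the definition of higher homotopic distance or from Proposition~\ref{A}. Once this reduction is clean, the inequality is immediate.
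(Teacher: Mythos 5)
Your proof is correct, and while it pivots on Theorem~\ref{T2} exactly as the paper does, the way you compare the two projection families is genuinely different. The paper argues at the level of open covers: it regards $X^n$ as the slice $X^n\times\{x_0\}\subset X^{n+1}$, intersects a cover of $X^{n+1}$ witnessing $\TC_{n+1}(X)=k$ with this slice, and uses that $\overline{\pr}_i$ restricts there to $\pr_i$ for $i\le n$, so the mutual homotopies restrict and give $\D(\pr_1,\ldots,\pr_n)\le k$; in effect it precomposes with the basepoint inclusion $x\mapsto(x,x_0)$, silently discards $\overline{\pr}_{n+1}$ (whose restriction is a constant map), and re-proves a special case of Proposition~\ref{P32} by hand. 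You instead precompose with the duplication map $\delta(x_1,\ldots,x_n)=(x_1,\ldots,x_n,x_n)$, under which all $n+1$ projections become projections of $X^n$, quote Proposition~\ref{P32} (its hypothesis is trivial since every $h_i=\delta$), and then drop the repeated entry $\pr_n$ via the monotonicity of $\D$ in the number of maps. Your route avoids choosing a basepoint, produces no constant maps to discard, and assembles the argument entirely from previously established propositions rather than repeating a covering argument; the paper's route is more elementary and self-contained at the level of open sets. Two small points on your bookkeeping: you only need the inequality $\D(\pr_1,\ldots,\pr_n)\le\D(\pr_1,\ldots,\pr_n,\pr_n)$, so the appeal to Proposition~\ref{A} is dispensable; and since the paper's monotonicity proposition is stated for $1<m<n$, your final step formally requires $n\ge 2$, the case $n=1$ being trivial because $\TC_1(X)=0$.
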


\begin{proof} For convenience, let us denote the projection maps by two different notations depending on their domains, that is, $\pr_i: X^n\rightarrow X$ and $\overline{pr}_i: X^{n+1}\rightarrow X$ which project onto the $i$-th factor. 

Suppose $\TC_{n+1}(X)=k$. By Theorem~\ref{T2}, there exists an open covering $\{U_0, U_1, \cdots, U_k\}$ of $X^{n+1}$ such that $\overline{\pr}_1\big|_{U_j}\simeq \overline{\pr}_2\big|_{U_j}\simeq \cdots \simeq \overline{\pr}_{n+1}\big|_{U_j}$ for all $j \in \{0, 1, \cdots, k\}$. 

Notice that $\overline{\pr}_i\big|_{X^n}=\pr_i$ for any $i\in \{1, 2, \cdots, n\}$. So 
\[
\pr_m\big|_{U_j} = \overline{\pr}_m\big|_{U_j\cap X^n} \simeq \overline{\pr}_\ell\big|_{U_j\cap X^n} = \pr_\ell\big|_{U_j} 
\]
for any distinct $m, \ell \in \{1, 2, \cdots, n\}$ and for all $j \in \{0, 1, \cdots, k\}$. Hence $\D(\pr_1, \pr_2, \cdots, \pr_n)=\TC_{n}(X)\leq k$.

\end{proof}

\begin{theorem}\cite{BGRT}\label{BGRT123} $\TC_n(X)\leq \cat(X^n)$.
\end{theorem}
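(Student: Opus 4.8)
The plan is to reduce the statement to the already-established identity of Theorem~\ref{T2} together with Corollary~\ref{C36}. By Theorem~\ref{T2} we know $\TC_n(X) = \D(\pr_1, \pr_2, \dots, \pr_n)$, where each $\pr_i \colon X^n \to X$ is the projection onto the $i$-th factor. So it suffices to bound the higher homotopic distance $\D(\pr_1, \dots, \pr_n)$ by $\cat(X^n)$. But this is exactly the content of Corollary~\ref{C36} applied with the ambient space taken to be $X^n$: for any maps $f_1, \dots, f_n \colon Z \to Y$ with $Z$ path-connected, one has $\D(f_1, \dots, f_n) \leq \cat(Z)$. Setting $Z = X^n$, $Y = X$, and $f_i = \pr_i$, I would conclude directly that $\D(\pr_1, \dots, \pr_n) \leq \cat(X^n)$.

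The first step, then, is to invoke Theorem~\ref{T2} to rewrite $\TC_n(X)$ as the distance of the projections; I would note here that this requires $X$ to be path-connected, which should be assumed (or is implicit throughout Definition~\ref{D}). The second step is to check that Corollary~\ref{C36} applies: its hypotheses demand path-connected source and target, so I need $X^n$ path-connected (which follows from $X$ being path-connected) and $X$ path-connected (which I already have). With these in place, the chain $\TC_n(X) = \D(\pr_1, \dots, \pr_n) \leq \cat(X^n)$ closes the argument in two lines.

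The one subtlety worth flagging is purely bookkeeping: Corollary~\ref{C36} is stated for maps out of a space it calls $X$ and into a space it calls $Y$, so I must take care to match the roles correctly, substituting $X^n$ for the corollary's source and $X$ for its target. There is no genuine obstacle here, because all the real work was already done in establishing Theorem~\ref{T28} (from which Corollary~\ref{C36} descends via the homotopy lifting property) and Theorem~\ref{T2}. The hard part, insofar as there is one, lived in those earlier results; the present theorem is a clean corollary of them, and the proof is essentially a composition of two previously proved inequalities.
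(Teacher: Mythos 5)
Your proof is correct, and it is genuinely shorter than the paper's own argument, though it leans on the same two pillars. The paper does \emph{not} simply compose Theorem~\ref{T2} with Corollary~\ref{C36}; instead it gives a hands-on construction: starting from a categorical cover $\{U_0,\dots,U_k\}$ of $X^n$, it uses homotopies between the coordinate inclusions $j_\ell\colon X^n\hookrightarrow X^{n+1}$ (in the spirit of Theorem~\ref{TV}), composes each such homotopy $F_i^\ell$ with the projection $\pr_\ell$ to produce explicit homotopies $H_i^\ell=\pr_\ell\circ F_i^\ell$ showing that every $\pr_\ell\big|_{U_i}$ is homotopic to the constant map at $x_0$, hence that $\pr_1\big|_{U_i}\simeq\cdots\simeq\pr_n\big|_{U_i}$ on each $U_i$, and only then invokes Theorem~\ref{T2} to conclude $\TC_n(X)=\D(\pr_1,\dots,\pr_n)\leq k$. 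Your route replaces that entire construction by a single citation of Corollary~\ref{C36} with source $X^n$ and target $X$: since $X$ is path-connected (as required by Definition~\ref{D} and Theorem~\ref{T2}) so is $X^n$, and the corollary gives $\D(\pr_1,\dots,\pr_n)\leq\cat(X^n)$ immediately; there is no circularity, since Corollary~\ref{C36} descends from Theorem~\ref{T28} and does not use the present theorem. What the two approaches buy: yours exposes the theorem as a purely formal consequence of the general bound ``distance $\leq$ cat of the domain,'' at the cost of routing through the fibration/pullback machinery hidden inside Corollary~\ref{C36}; the paper's version is more explicit and self-contained, exhibiting the concrete null-homotopies of the projections, which fits the authors' stated goal of illustrating how $\TC_n$ facts can be re-proved directly with homotopic-distance techniques. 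Indeed, the paper's own Remark following Corollary~\ref{C36} (observing that part of Theorem~\ref{TV} follows from that corollary) shows the authors were aware of exactly the shortcut you took.
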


\begin{proof}  For a fixed $x_0 \in X$, consider the inclusions
\[
j_\ell: X^n \hookrightarrow X^{n+1} \hspace{0.02in}\text{ given by }\hspace{0.02in} j_\ell(x_1, \cdots, x_n)= (x_1, \cdots, x_{i-1}, x_0, x_{i+1}, \cdots, x_n).
\]

\noindent for $\ell = 1, 2, \cdots, n$.

Let suppose $\cat(X^n)=k$. Then there exists an open covering $\{U_0, U_1, \cdots, U_k\}$ of $X^n$ such that $j_\ell\big|_{U_i}\simeq j_{\ell+1}\big|_{U_i}$ for all $i$ and for all $\ell$. Let $x=(x_1, x_2, \cdots, x_n)$ and let say $F_i^\ell: U_i\times I \rightarrow X^{n+1}$ be the homotopy such that $F_i^\ell(x,0)=j_\ell(x)$ and $F_i^\ell(x,1)=j_{\ell+1}(x)$ for every $\ell$.

On each $U_i$, define a homotopy $H_i^\ell:U_i\times I\rightarrow X$ by $H_i^\ell(x,t)=(\pr_\ell\circ F_i^\ell)(x,t)$ where $\pr_\ell: X^n\rightarrow X$ is the projection that maps onto the $\ell$-th factor. Since 
\begin{align*}
H_i^\ell(x,0)=\pr_\ell(F_i^\ell(x,0))=\pr_\ell(j_\ell(x))=(x_1, x_2, x_{\ell-1}, x_0, x_\ell, \cdots, x_n)=x_0
\end{align*}
and
\begin{align*}
H_i^\ell(x,1)&=\pr_\ell(F_i^\ell(x,1))=\pr_\ell(j_{\ell+1}(x)) \\
&=(x_1, x_2, x_{\ell}, x_0, x_{\ell+1}, \cdots, x_n) =x_\ell \\  
&=\pr_\ell(x_1, \cdots, x_n)
\end{align*}

\noindent for each $\ell = 1, 2, \cdots, n$, we have $\pr_1\big|_{U_i} \simeq \pr_2\big|_{U_i} \simeq \cdots \simeq \pr_n\big|_{U_i}$ for all $i=0, 1, \cdots, k$. Thus $\TC_n(X)=\D(\pr_1, \pr_2, \cdots, \pr_n)\leq k$ by Lemma~\ref{T2}. 

\end{proof}

\begin{theorem} Let $f_1, f_2, \cdots, f_n: X\rightarrow Y$ and $g_1, g_2, \cdots, g_n: \widetilde{X}\rightarrow \widetilde{Y}$ be maps. If $X\times \widetilde{X}$ is normal space, then we have
\[
\D(f_1\times g_1, f_2\times g_2, \cdots, f_n\times g_n)\leq \D(f_1, f_2, \cdots, f_n)+\D(g_1, g_2, \cdots, g_n).
\]
\end{theorem}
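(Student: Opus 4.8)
The plan is to reduce the statement to Lemma~\ref{OS} by converting the two homotopic-distance coverings into coverings of the product $X\times\widetilde{X}$, each carrying one of two complementary ``properties'': one that records the coincidence up to homotopy of the $f_i$'s, and one that records the coincidence of the $g_i$'s. The whole difficulty is in choosing these two properties correctly; once that is done, the lemma does the combinatorial work and a coordinatewise recombination of homotopies finishes the argument.

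First I would set $k_1=\D(f_1,\dots,f_n)$ and $k_2=\D(g_1,\dots,g_n)$ (the inequality being trivial if either is $\infty$), and fix open covers $\{U_0,\dots,U_{k_1}\}$ of $X$ and $\{V_0,\dots,V_{k_2}\}$ of $\widetilde{X}$ realizing them, so that $f_1|_{U_i}\simeq\cdots\simeq f_n|_{U_i}$ on each $U_i$ and $g_1|_{V_j}\simeq\cdots\simeq g_n|_{V_j}$ on each $V_j$. Writing $\pr_X$ and $\pr_{\widetilde{X}}$ for the two projections of $X\times\widetilde{X}$, I would declare an open $W\subseteq X\times\widetilde{X}$ to satisfy Property (A) if $(f_1\circ\pr_X)|_W\simeq\cdots\simeq(f_n\circ\pr_X)|_W$, and to satisfy Property (B) if $(g_1\circ\pr_{\widetilde{X}})|_W\simeq\cdots\simeq(g_n\circ\pr_{\widetilde{X}})|_W$. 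Then $\{U_0\times\widetilde{X},\dots,U_{k_1}\times\widetilde{X}\}$ is an open cover of $X\times\widetilde{X}$ each of whose members satisfies Property (A) (compose the homotopies living on the $U_i$ with $\pr_X$), and $\{X\times V_0,\dots,X\times V_{k_2}\}$ is an open cover each of whose members satisfies Property (B). Both properties are inherited by open subsets (restrict the homotopy) and by disjoint unions (the pieces are open and closed inside the union, so the homotopies glue continuously), so Lemma~\ref{OS} applies to the normal space $X\times\widetilde{X}$ and yields an open cover $\{W_0,\dots,W_{k_1+k_2}\}$ on which Property (A) and Property (B) hold simultaneously.

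The final step is to recombine. On a fixed $W_r$, Property (A) supplies a homotopy $(f_\ell\circ\pr_X)|_{W_r}\simeq(f_{\ell+1}\circ\pr_X)|_{W_r}$ and Property (B) supplies a homotopy $(g_\ell\circ\pr_{\widetilde{X}})|_{W_r}\simeq(g_{\ell+1}\circ\pr_{\widetilde{X}})|_{W_r}$. Since $f_\ell\times g_\ell=(f_\ell\circ\pr_X,\ g_\ell\circ\pr_{\widetilde{X}})$ as a map into $Y\times\widetilde{Y}$, the coordinatewise product of these two homotopies is a homotopy $(f_\ell\times g_\ell)|_{W_r}\simeq(f_{\ell+1}\times g_{\ell+1})|_{W_r}$ for every $\ell$. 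Hence the cover $\{W_0,\dots,W_{k_1+k_2}\}$, of cardinality $k_1+k_2+1$, witnesses $\D(f_1\times g_1,\dots,f_n\times g_n)\leq k_1+k_2$, which is exactly the claimed bound.

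I expect the only genuine content to lie in the choice of Property (A) and Property (B): they must be (i) demonstrably present on the two ``slab'' coverings, (ii) stable under open subsets and disjoint unions as Lemma~\ref{OS} requires, and (iii) jointly strong enough to reconstruct a homotopy of the product maps. Defining them through the projected compositions $f_\ell\circ\pr_X$ and $g_\ell\circ\pr_{\widetilde{X}}$, rather than through the product maps $f_\ell\times g_\ell$ directly, is precisely what makes all three conditions hold at once; this is the point I would be most careful about. The coordinatewise recombination of homotopies and the disjoint-union inheritance are the two verifications I would spell out, but neither should present a real obstacle.
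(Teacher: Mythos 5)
Your proposal is correct and follows essentially the same route as the paper: both turn the two given coverings into slab coverings $\{U_i\times\widetilde{X}\}$ and $\{X\times V_j\}$ of the normal product, mix them with Lemma~\ref{OS} using Properties (A) and (B), and then recombine the two families of homotopies coordinatewise to handle $f_\ell\times g_\ell$. The only difference is cosmetic: the paper first replaces each chain of homotopies by homotopies to a fixed map (borrowing the argument of Theorem~\ref{T28}) and states Properties (A)/(B) in terms of $f_s\times\id_{\widetilde{X}}$ and $\id_X\times g_s$, whereas you phrase them via $(f_\ell\circ\pr_X)|_W$ and $(g_\ell\circ\pr_{\widetilde{X}})|_W$ and pair consecutive homotopies directly, which is a slight streamlining.
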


\begin{proof} Let $\D(f_1, f_2, \cdots, f_n)=k_1$ and $\D(g_1, g_2, \cdots, g_n)=k_2$. So there exists an open covering $\{U_0, \cdots, U_{k_1}\}$ of $X$ such that $f_1\big|_{U_i}\simeq f_2\big|_{U_i}\simeq \cdots\simeq f_n\big|_{U_i}$ for all $i$ and there exists an open covering $\{V_0, \cdots, V_{k_2}\}$ of $\widetilde{X}$ such that $g_1\big|_{V_j}\simeq g_n\big|_{V_j}\simeq \cdots\simeq g_n\big|_{V_j}$ for all $j$. Then we have homotopies $\widetilde{F_s}:f_s\simeq f_{s+1}$ for each $s=1,2,\cdots, {n-1}$. By the argument in Theorem~\ref{T28}, we can find new homotopies between $f_s$ and a fixed map $f$ for all $s=1,2, \cdots, n$. Similarly, we can find new homotopies between $g_s$ and a fixed map $g$.

Let say that Property (A) is the following: On each $U_i$, there exists homotopies $f_s \times \id_{\widetilde{X}}\simeq f \times \id_{\widetilde{X}}$ for all $s=1, 2, \cdots, n$. Similarly, let Property (B) be the following: On each $V_i$, there exists homotopies $\id_{X}\times g_s\simeq \id_{X} \times g$ for all $s$. 

$\mathcal{U}=\{U_1\times \widetilde{X}, \cdots, U_{k_1}\times \widetilde{X}\}$ is a covering of $X\times \widetilde{X}$ satisfying Property (A) and $\mathcal{V}=\{X\times V_1, \cdots, X\times V_{k_2}\}$ is a covering of $X\times \widetilde{X}$ satisfying Property (B). So by Lemma~\ref{OS}, there exists $\mathcal{W}=\{W_0, W_1, \cdots, W_{k_1 + k_2}\}$ which satisfies both Property (A) and Property (B). Therefore, we have the following homotopies

\[ F_m: f_m \times \id\big|_{\widetilde{X}} \simeq f \times \id\big|_{\widetilde{X}}
\]
and 
\[ G_m: \id\big|_{X} \times g_m \simeq \id\big|_{X} \times g
\]

\noindent where $F_m:W_s\times I\rightarrow Y\times \widetilde{Y}$ and $G_m : W_s\times I\rightarrow Y\times \widetilde{Y}$ for all $m=1,2,\cdots, n$ and for $s=1,2,\cdots, k_1 +k_2$.

For $m=1,2,\cdots, n$, define $H_m :=\big( \pr_1 \circ F_m, \pr_{2}\circ G_m \big)$ where $\pr_1:Y\times \widetilde{Y}\rightarrow Y$ and $\pr_2:Y\times \widetilde{Y}\rightarrow \widetilde{Y}$ projection maps onto the first and the second factor, respectively. Hence for $z\in X\times \widetilde{X}$,

\begin{align*}
H_m(z,0) &= \big( \pr_1 \circ F_m(z,0), \pr_{2}\circ G_m(z,0)\big) \\
&=\big( \pr_1 (f_m\times\id\big|_{\widetilde{X}}(z)), \pr_{2}(\id\big|_{X} \times g_m(z))\big)\\
&= \big( f_m (\pr_1(z)), \pr_{2}(g_m (\pr_2(z))\big)\\
&= (f_m\times g_m)(z).
\end{align*}

and 

\begin{align*}
H_m(z,0) &= \big( \pr_1 \circ F_m(z,1), \pr_{2}\circ G_m(z,1)\big) \\
&=\big(\pr_1 (f\times\id\big|_{\widetilde{X}}(z)), \pr_{2}(\id\big|_{X} \times g(z))\big)\\
&= \big(f (\pr_1(z)), \pr_{2}(g (\pr_2(z))\big)\\
&= (f\times g)(z).
\end{align*}

So each $f_m\times g_m$ is homotopic to each other for all $m=1,2, \cdots, n$ on each $W_s$. Thus $\D(f_1\times g_1, f_2\times g_2, \cdots, f_n\times g_n)\leq k_1 + k_2$.
\end{proof}

\begin{corollary} \cite{BGRT} $\TC_n (X_1 \times X_2) \leq \TC_n(X_1)+\TC_n(X_2)$.
\end{corollary}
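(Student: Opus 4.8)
The plan is to reduce the statement to the product inequality for homotopic distance proved immediately above, after rewriting every topological complexity as a homotopic distance of projection maps via Theorem~\ref{T2}. Write $p_i\colon X_1^n\to X_1$, $q_i\colon X_2^n\to X_2$, and $\pi_i\colon (X_1\times X_2)^n\to X_1\times X_2$ for the projections onto the $i$-th factor. Assuming $X_1$ and $X_2$ are path-connected (so that $X_1\times X_2$ is as well), Theorem~\ref{T2} gives $\TC_n(X_1)=\D(p_1,\dots,p_n)$, $\TC_n(X_2)=\D(q_1,\dots,q_n)$, and $\TC_n(X_1\times X_2)=\D(\pi_1,\dots,\pi_n)$.

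The geometric heart of the argument is the shuffle homeomorphism $\phi\colon (X_1\times X_2)^n\xrightarrow{\cong} X_1^n\times X_2^n$ sending $((a_1,b_1),\dots,(a_n,b_n))$ to $((a_1,\dots,a_n),(b_1,\dots,b_n))$. Under $\phi$ the $i$-th projection of the product space corresponds exactly to $p_i\times q_i$; that is, $\pi_i=(p_i\times q_i)\circ\phi$ for every $i$, since both sides return $(a_i,b_i)$. Because $\phi$ is a homeomorphism, precomposition by $\phi$ carries any open cover of $X_1^n\times X_2^n$ witnessing $\D(p_1\times q_1,\dots,p_n\times q_n)$ bijectively to an open cover of $(X_1\times X_2)^n$ witnessing $\D(\pi_1,\dots,\pi_n)$, and conversely; the restricted maps are homotopic on corresponding members of the covers precisely because $\pi_i=(p_i\times q_i)\circ\phi$. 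Hence $\D(\pi_1,\dots,\pi_n)=\D(p_1\times q_1,\dots,p_n\times q_n)$.

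It then remains only to invoke the product theorem proved above with $f_i=p_i$ and $g_i=q_i$ (so $X=X_1^n$, $Y=X_1$, $\widetilde X=X_2^n$, $\widetilde Y=X_2$), provided $X_1^n\times X_2^n\cong(X_1\times X_2)^n$ is normal, which yields
\[
\D(p_1\times q_1,\dots,p_n\times q_n)\le \D(p_1,\dots,p_n)+\D(q_1,\dots,q_n).
\]
Chaining the three equalities with this inequality gives $\TC_n(X_1\times X_2)\le\TC_n(X_1)+\TC_n(X_2)$.

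I expect the only genuine subtlety to be the clean identification $\pi_i=(p_i\times q_i)\circ\phi$ together with the observation that homotopic distance is invariant (not merely monotone) under the homeomorphism $\phi$; everything else is bookkeeping that follows from the results already established. A secondary point worth flagging is the normality hypothesis inherited from the product theorem: the corollary as stated is implicitly assuming $X_1^n\times X_2^n$ is normal and $X_1,X_2$ path-connected, so I would either record these hypotheses explicitly or note that they hold for the spaces under consideration.
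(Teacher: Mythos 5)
Your proposal is correct and follows essentially the same route as the paper: rewrite all three $\TC_n$'s as homotopic distances of projections via Theorem~\ref{T2}, identify $(X_1\times X_2)^n$ with $X_1^n\times X_2^n$ so that the projections become the product maps $p_i\times q_i$, and apply the product inequality for $\D$. The paper's proof is just a terser version of this; your explicit treatment of the shuffle homeomorphism and of the normality/path-connectedness hypotheses fills in details the paper leaves implicit.
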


\begin{proof} Take the maps $\pr_1^i, \pr_2^i, \cdots, \pr_n^i: X_i^n\rightarrow X_i$ for $i=1,2$ where $\pr_j^i$ denotes the projection onto the $j$-th factor. Then 
\[
\TC_n(X_1 \times X_2) = \D(\pr_1^1\times \pr_1^2, \cdots, \pr_n^1 \times \pr_n^2) \leq  \Sigma_{i=1}^{2} \D(\pr_1^i, \cdots, \pr_n^i,)=  \TC_n(X_1)+\TC_n(X_2).
\]

\end{proof}

%%%%%%%%%%%%%%%%%%%%%%%%%%%%%%%%%%%%%%%%%%%%%%%%%%%%%
\section{Homotopy Invariance of the Higher Homotopic Distance}
%%%%%%%%%%%%%%%%%%%%%%%%%%%%%%%%%%%%%%%%%%%%%%%%%%%%%

\begin{theorem} Higher homotopic distance is homotopy invariance in the sense that if $X\simeq X'$ and $Y\simeq Y'$ homotopy equivalent spaces connecting the maps $f_i:X\rightarrow Y$ and $g_i: X'\rightarrow Y'$ for all $i \in \{1, 2, \cdots, n\}$, then $\D(f_1, f_2, \cdots, f_n)=\D(g_1, g_2, \cdots, g_n)$.
\end{theorem}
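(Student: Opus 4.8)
The plan is to reduce the homotopy invariance of higher homotopic distance to the composition inequalities already established in Proposition~\ref{P31} and Proposition~\ref{P32}, combined with the invariance under homotopy of the individual maps (Proposition~\ref{A}). First I would unpack what it means for the homotopy equivalences to \emph{connect} the two families of maps. Let $\varphi\colon X\to X'$ and $\psi\colon Y\to Y'$ be the homotopy equivalences, with homotopy inverses $\varphi'\colon X'\to X$ and $\psi'\colon Y'\to Y$, so that $\varphi'\circ\varphi\simeq\id_X$, $\varphi\circ\varphi'\simeq\id_{X'}$, and similarly for $\psi$. The compatibility hypothesis should read $g_i\simeq\psi\circ f_i\circ\varphi'$ (equivalently $f_i\simeq\psi'\circ g_i\circ\varphi$) for every $i$; I would state this explicitly at the start, since the phrase ``connecting the maps'' in the statement is where the real content of the hypothesis lives.

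The core of the argument is a single chain of inequalities. Starting from $\D(g_1,\dots,g_n)$, I would use Proposition~\ref{A} to replace each $g_i$ by the homotopic map $\psi\circ f_i\circ\varphi'$, giving
\[
\D(g_1,\dots,g_n)=\D(\psi\circ f_1\circ\varphi',\dots,\psi\circ f_n\circ\varphi').
\]
Then, since all the outer maps are the \emph{same} map $\psi$ (so in particular $\psi\simeq\psi$ trivially), Proposition~\ref{P31} applies to strip off $\psi$ from the left, and Proposition~\ref{P32} applies to strip off $\varphi'$ from the right (again $\varphi'\simeq\varphi'$ trivially), yielding
\[
\D(\psi\circ f_1\circ\varphi',\dots,\psi\circ f_n\circ\varphi')\leq\D(f_1\circ\varphi',\dots,f_n\circ\varphi')\leq\D(f_1,\dots,f_n).
\]
This gives $\D(g_1,\dots,g_n)\leq\D(f_1,\dots,f_n)$. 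The reverse inequality is obtained symmetrically, using $f_i\simeq\psi'\circ g_i\circ\varphi$ together with the inverse equivalences $\psi'$ and $\varphi$, which establishes equality.

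The step I expect to be the main obstacle is not any single inequality but rather pinning down the precise meaning of the hypothesis and verifying that the homotopy relation $g_i\simeq\psi\circ f_i\circ\varphi'$ really is what is intended and is consistent in both directions. In particular, I would need to check that applying the two composition inequalities in sequence is legitimate: Proposition~\ref{P31} and Proposition~\ref{P32} each require that the family of maps being pre- or post-composed is pairwise homotopic, and here that family is constant (a single map repeated), so the hypotheses hold vacuously. I would also remark that the inverse-direction step uses the homotopy inverses, whose existence is exactly the content of $X\simeq X'$ and $Y\simeq Y'$. Once the hypothesis is made precise, the proof is a short three-line computation in each direction, and the only care required is bookkeeping of which equivalence and which inverse is used on which side.
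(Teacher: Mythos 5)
Your proposal is correct and uses essentially the same ingredients as the paper's proof: Proposition~\ref{A} to replace maps by homotopic ones, and Propositions~\ref{P31} and~\ref{P32} to strip off the post- and pre-composed equivalences (whose pairwise-homotopy hypotheses hold trivially for a constant family). The only organizational difference is that you prove two symmetric one-sided inequalities, whereas the paper chains equalities such as $\D(f_1,\cdots,f_n)=\D(f_1\circ\beta,\cdots,f_n\circ\beta)$ via sandwich arguments with $\beta\circ\alpha\simeq\id_X$; note that your explicit formulation of the ``connecting'' hypothesis as $g_i\simeq\psi\circ f_i\circ\varphi'$ is exactly what the paper needs (but leaves implicit) to justify its middle step $\D(f_1\circ\beta,\cdots,f_n\circ\beta)=\D(\eta\circ g_1,\cdots,\eta\circ g_n)$.
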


\begin{proof} If $X\simeq X'$, then there exist maps $\alpha: X\rightarrow X'$ and $\beta: X'\rightarrow X$ such that $\alpha \circ \beta \simeq \id_{X'}$ and $\beta \circ \alpha \simeq \id_{X}$.

By Proposition~\ref{P32}, $\D( f_1\circ \beta, f_2 \circ \beta, \cdots, f_n\circ \beta) \leq \D(f_1, f_2, \cdots,f_n)$. On the other hand, 
\begin{align*}
\D(f_1, f_2, \cdots,f_n) & = \D( f_1\circ \id_X, f_2 \circ \id_X, \cdots, f_n\circ \id_X) \\
&=\D( f_1\circ \beta \circ \alpha, f_2 \circ \beta \circ \alpha, \cdots, f_n\circ \beta \circ \alpha) \\
&\leq \D( f_1\circ \beta , f_2 \circ \beta , \cdots, f_n\circ \beta) 
\end{align*}

\noindent where the inequality follows from Proposition~\ref{P32} and the second equality follows from Proposition~\ref{A}. So $\D( f_1\circ \beta, f_2 \circ \beta, \cdots, f_n\circ \beta) = \D(f_1, f_2, \cdots,f_n)$.

If $Y\simeq Y'$, then there exist maps $\gamma: Y\rightarrow Y'$ and $\eta: Y'\rightarrow Y$ such that $\gamma \circ \eta \simeq \id_{Y'}$ and $\eta \circ \gamma \simeq \id_{Y}$. In a similar way, using Proposition~\ref{P31}, one can see that $\D(g_1, g_2, \cdots, g_n)=\D(\beta_Y \circ g_1, \eta \circ g_2, \cdots, \eta \circ g_n)$. Hence 
\begin{align*}
\D(f_1, f_2, \cdots,f_n) &= \D(f_1 \circ \beta, f_2\circ \beta, \cdots, f_n\circ \beta) \\
&=\D(\eta \circ g_1, \eta \circ g_2, \cdots, \eta \circ g_n) \\
&= \D(g_1, g_2, \cdots, g_n).
\end{align*}
\end{proof}

\begin{corollary}\cite{R} $\TC_n$ is homotopy invariant.
\end{corollary}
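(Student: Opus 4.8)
The plan is to derive this immediately from the homotopy invariance of higher homotopic distance established in the preceding theorem, together with the identification $\TC_n(X)=\D(\pr_1,\ldots,\pr_n)$ from Theorem~\ref{T2}. First I would fix a homotopy equivalence $X\simeq X'$ given by maps $\alpha:X\to X'$ and $\beta:X'\to X$ with $\alpha\circ\beta\simeq\id_{X'}$ and $\beta\circ\alpha\simeq\id_{X}$, and aim to show $\TC_n(X)=\TC_n(X')$.

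The central observation is that $\alpha$ induces a homotopy equivalence on the $n$-fold products. Setting $\alpha^n:=\alpha\times\cdots\times\alpha:X^n\to (X')^n$ and $\beta^n:=\beta\times\cdots\times\beta:(X')^n\to X^n$, one checks that $\alpha^n\circ\beta^n=(\alpha\circ\beta)^n\simeq\id_{(X')^n}$ and $\beta^n\circ\alpha^n\simeq\id_{X^n}$, since a product of homotopies is again a homotopy; hence $X^n\simeq (X')^n$.

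The key compatibility is that these product equivalences intertwine the coordinate projections with the equivalence $\alpha$ on a single factor. Writing $\pr_i:X^n\to X$ and $\pr_i':(X')^n\to X'$ for the projections onto the $i$-th factor, a direct evaluation gives
\[
\pr_i'\circ\alpha^n=\alpha\circ\pr_i
\]
for every $i\in\{1,2,\ldots,n\}$, since both sides send $(x_1,\ldots,x_n)$ to $\alpha(x_i)$. This is precisely the condition that the homotopy equivalences $\alpha^n:X^n\to (X')^n$ and $\alpha:X\to X'$ connect the maps $\pr_i$ and $\pr_i'$ in the sense required by the homotopy invariance theorem (equivalently, $\pr_i'\simeq\alpha\circ\pr_i\circ\beta^n$). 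Applying that theorem with domain equivalence $X^n\simeq (X')^n$ and codomain equivalence $X\simeq X'$ then yields $\D(\pr_1,\ldots,\pr_n)=\D(\pr_1',\ldots,\pr_n')$, and Theorem~\ref{T2} rewrites this as $\TC_n(X)=\TC_n(X')$.

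The step I expect to require the most care is verifying that the connecting hypothesis of the homotopy invariance theorem is genuinely met: one must confirm that $\alpha^n$ is a bona fide homotopy equivalence, and that the strict commutativity $\pr_i'\circ\alpha^n=\alpha\circ\pr_i$ supplies exactly the homotopy-commuting squares the theorem consumes. Both points are routine, so no essential obstacle remains once the product equivalence and its interaction with the projections are in place.
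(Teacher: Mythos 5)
Your proof is correct and follows essentially the same route the paper intends for this corollary: identify $\TC_n(X)=\D(\pr_1,\ldots,\pr_n)$ via Theorem~\ref{T2}, note that a homotopy equivalence $X\simeq X'$ induces one $X^n\simeq (X')^n$ compatible with the coordinate projections, and apply the homotopy invariance theorem for higher homotopic distance. The paper states the corollary without writing out a proof, and your write-up supplies exactly the routine verifications (that $\alpha^n$, $\beta^n$ are homotopy equivalences and that $\pr_i'\circ\alpha^n=\alpha\circ\pr_i$ gives the required connecting condition) that the paper's implicit argument relies on.
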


\end{document}